\theoremstyle{plain}
\newtheorem{thm}{Theorem}[section]
\newtheorem{lem}[thm]{Lemma}
\newtheorem{prop}[thm]{Proposition}
\theoremstyle{definition}
\newtheorem{rem}[thm]{Remark}
\newtheorem{example}[thm]{Example}
\renewcommand{\P}{{\mathbb P}}
\newcommand{\expect}{\operatorname{\mathbb{E}}}
\newcommand{\dint}{\,\mathup{d}}
\newcommand{\Finput}{\mathcal{F}}
\newcommand{\Ball}{\mathcal{B}}
\DeclareMathOperator{\supp}{supp}
\newcommand{\ind}{\mathds{1}}
\DeclareMathOperator{\Vol}{Vol}
\DeclareMathOperator{\diag}{diag}
\DeclareMathOperator*{\esssup}{ess\,sup}
\DeclareMathOperator*{\essinf}{ess\,inf}
\DeclareMathOperator{\median}{med}
\newcommand{\deter}{\textup{det}}
\newcommand{\mix}{\textup{mix}}
\newcommand{\Wmix}{\mathbf{W}}
\newcommand{\Wspace}{\mathcal{W}}
\newcommand{\MC}{\textup{MC}}
\newcommand{\ellmean}{\ell\textup{-mean}}
\newcommand{\oneMean}{1\textup{-mean}}
\newcommand{\twoMean}{2\textup{-mean}}
\newcommand{\prob}{\textup{prob}}
\DeclareMathOperator{\Int}{INT}
\newcommand{\eps}{\varepsilon}
\newcommand{\embed}{\hookrightarrow}
\renewcommand{\rho}{\varrho}
\newcommand{\veci}{\mathbf{i}}
\newcommand{\vecm}{\mathbf{m}}
\newcommand{\vecs}{\mathbf{s}}
\newcommand{\vecu}{\mathbf{u}}
\newcommand{\vecv}{\mathbf{v}}
\newcommand{\vecx}{\mathbf{x}}
\newcommand{\vecy}{\mathbf{y}}
\newcommand{\vecz}{\mathbf{z}}
\newcommand{\vecX}{\mathbf{X}}
\newcommand{\vecalpha}{\boldsymbol{\alpha}}
\newcommand{\R}{{\mathbb R}}
\newcommand{\N}{{\mathbb N}}
\newcommand{\Z}{{\mathbb Z}}
\DeclareMathAlphabet{\mathup}{OT1}{\familydefault}{m}{n}
\newcommand{\widebar}[1]{\mbox{\kern1.5pt\hbox{\vbox{\hrule height 0.6pt \kern0.35ex
        \hbox{\kern-0.15em \ensuremath{#1 }\kern0.0em}}}}\kern-0.1pt}
\newcommand{\fracts}[2]{{\textstyle\frac{#1}{#2}}}
\newlength{\fixboxwidth}
\title{Optimal confidence for Monte Carlo integration of smooth functions}
\date{\today}
\author{Robert J. Kunsch\thanks{Institut f\"ur Mathematik, 
Universit\"at Osnabr\"uck, Albrechtstr. 28a, 49076 Osnabr\"uck, Germany, 
Email: robert.kunsch@uni-osnabrueck.de},\; 
and Daniel Rudolf\thanks{Institute for Mathematical Stochastics, Universit\"at G\"ottingen 
\& Felix-Bernstein-Institute for Mathematical Statistics, 
Goldschmidtstra\ss e 7, 
37077 G\"ottingen, Germany, 
Email: daniel.rudolf@uni-goettingen.de}
}
\begin{document}

\maketitle

\begin{abstract}  
  We study the complexity of approximating integrals of smooth functions
  at absolute precision~$\eps > 0$ with confidence level $1 - \delta \in (0,1)$.
  The optimal
  error rate            
  for multivariate functions
  from classical isotropic Sobolev spaces $W_p^r(G)$
  with sufficient smoothness
  on bounded Lipschitz domains~$G \subset \R^d$
  is determined.
  It turns out that the integrability index~$p$
  has an effect on the influence of the uncertainty~$\delta$ in the complexity.
  In the limiting case~$p = 1$ we see
  that deterministic methods cannot be improved by randomization.
  In general, higher smoothness reduces the additional effort
  for diminishing the uncertainty.
  Finally, we add a discussion about this problem
  for function spaces with mixed smoothness.
\end{abstract}

\textbf{Keywords.\;} Monte Carlo integration;
Sobolev functions;
information-based complexity;
standard information;
asymptotic error;
confidence intervals.

\section{Introduction} \label{sec: intro}

We want to compute the integral
\begin{equation}   \label{eq:INT} 
  \Int (f) = \int_{G} f(\vecx) \dint \vecx
\end{equation} 
of $f\colon G\to \R$ from the unit ball $\Ball_\Wspace$ of a
(semi-)normed linear space $\Wspace$ of functions 
defined on a domain~$G \subset \R^d$
where we are only allowed to use 
function values as information within randomized algorithms.
The focus lies on the
\emph{$(\eps,\delta)$-complexity~$n^{\MC}_{\prob}(\eps,\delta,\Wspace)$},
that is, the minimal number~$n$ of function values needed
for randomized algorithms~$A_n$ % \colon \Omega \times \Finput \to \R$
in order to approximate the integral~\eqref{eq:INT} such that
\begin{equation} \label{eq:(eps,delta)-appr}
  \P\{|A_n(f) - \Int f| > \eps \} \leq \delta
  \qquad \text{for all $f$ with $\|f\|_{\Wspace}\leq 1$,}
\end{equation}
where $\|\cdot\|_{\Wspace}$ is the (semi-)norm of $\Wspace$.
A method with this property
of guaranteeing a small (absolute) \emph{error~$\eps > 0$}
with \emph{confidence $1-\delta \in (0,1)$} (or \emph{uncertainty~$\delta$})
for inputs from the unit ball $\Ball_\Wspace$
is called \emph{$(\eps,\delta)$-approximating in $\Wspace$},
see also \cite{KNR18}.
We also consider the
\emph{$n$-th minimal probabilistic Monte Carlo error at uncertainty~$\delta$}, 
defined by
\begin{equation} \label{eq:e^prob}
  e^{\MC}_{\prob}(n,\delta,\Wspace)
    := \inf\left\{\eps > 0 \mid
                  \exists \text{ $(\eps,\delta)$-approximating algorithm $A_n$
                  in $\Wspace$}
            \right\} \,.
\end{equation}
The probabilistic error criterion from above
is less common in \emph{information-based complexity} (IBC)
where the standard notion of \emph{Monte Carlo error}
is some type of mean error. In general,
for some $\ell\geq1$ the \emph{$n$-th minimal $\ell$-mean Monte Carlo error}
is given by
\begin{equation} \label{eq:ME}
  e^{\MC}_{\ellmean}(n,\Wspace)
    := \inf_{A_n} \sup_{\| f \|_{\Wspace} \leq 1}
          \left(\expect |A_n(f) - \Int f|^\ell\right)^{1/\ell} \,.
\end{equation}
Here, the infimum is taken over all randomized algorithms which use at most 
$n$ function values.
Most frequently studied are
the \emph{root mean squared error} (RMSE), that is, $e^{\MC}_{\twoMean}$,
% that is, $\ell=2$,
as well as the expected error $e^{\MC}_{\oneMean}$. % that is, $\ell=1$.
Accordingly,
we define the \emph{$\eps$-complexity} $n^{\MC}_{\ellmean}(\eps,\Wspace)$
as the minimal number of function values
needed by a randomized algorithm that guarantees
a worst case $\ell$-mean error smaller than~$\eps$.
For more details on IBC we refer to the books~\cite{NW08,NW10,NW12,TWW88}.

One might argue that the error criterion does not matter.
However, this is not true. For example, 
using Markov's inequality
it is always possible to construct $(\eps,\delta)$-approximating algorithms
once we know methods which work for arbitrarily small mean errors.
That way, however,
the cost estimates are not optimal in terms of the $\delta$-dependence,
namely polynomial rather than logarithmic.
In some situations this can be fixed by using more advanced inequalities
such as Hoeffding bounds.
In other situations commonly known algorithms may need to be modified
which lead to more robust methods less prone to outliers.
For this reason the probabilistic criterion is frequently used in statistics,
see for example \cite{GNP13,HJLO13,Hu17Bernoulli,Hu17}.
Furthermore, there are numerical problems
which can be solved with respect to the probabilistic $(\eps,\delta)$-criterion
but the mean error is unbounded, see~\cite{KNR18}.
In other words, this criterion
seems to be the right one
for the concept of solvability.

In Section~\ref{sec:LBs} we provide two generic lower bounds for
the $n$-th minimal probabilistic Monte Carlo error based on bump functions.
In Section~\ref{sec:UBs} we discuss several approaches for deriving upper error bounds 
on Sobolev classes and discuss 
in which cases they lead to optimal rates.
We mainly consider classical
isotropic Sobolev spaces~$W_p^r(G)$
on domains~$G \subseteq \R^d$.
For integer smoothness~$r \in \N_0$
and integrability parameter $1 \leq p \leq \infty$,
these spaces are given by 
\begin{equation*}
  W_p^r(G)
    :=\biggl\{f \in L_p(G) \,\bigg|\,
              \|f\|_{W_p^r(G)}
                := \biggl(\sum_{\substack{\vecalpha \in \N_0^d\\
                                   |\vecalpha|_1 \leq r}}
                     \|D^{\vecalpha} f\|_{L_p(G)}^p\biggr)^{1/p}
                < \infty
       \biggr\} \,,  
\end{equation*}
with the usual modification for~$p=\infty$ and the weak derivative
$D^{\vecalpha}f = \partial_{x_1}^{\alpha_1} \cdots \partial_{x_d}^{\alpha_d} f$
for multi-index $\vecalpha=(\alpha_1,\dots,\alpha_d)\in\N_0^d$.
Note that for~$r=0$ we obtain the Lebesgue spaces $L_p(G)$.

Our main result is for spaces~$W_p^{r}(G)$
on bounded Lipschitz domains~$G \subset \R^d$ (see~\cite{NT06} for a definition),
and with sufficient smoothness, $rp>d$.
In asymptotic notation (see definitions below) it states
\begin{equation} \label{eq:erate-intro}
  e^{\MC}_{\prob}(n,\delta,W_p^{r}(G))
    \asymp n^{-r/d} \, \min\left\{1, \,
                                  \left(\frac{\log \delta^{-1}}{n}\right)^{1-1/q}
                           \right\}
\end{equation}
with~$q := \min\{p,2\}$, or equivalently
\begin{equation} \label{eq:nrate-intro}
  n^{\MC}_{\prob}(\eps,\delta,W_p^{r}(G))
    \asymp \min\left\{\eps^{-d/r},\,
          \eps^{-\left.1\middle/
                  \left(\frac{r}{d} + \frac{q-1}{q}\right)\right.}
            \, (\log \delta^{-1})^{\left.1\middle/
                                    \left(\frac{q}{q-1} \cdot \frac{r}{d} + 1
                                    \right)\right.}
               \right\}\,,
\end{equation}
see Theorem~\ref{thm:LBs} and Theorem~\ref{thm:UBsWprSep}.
The condition~$r p > d$ guarantees that the space $W_p^r(G)$
is compactly embedded in the space of continuous functions,
see for instance~\cite{AF75/02}.
Only then function evaluations are well
defined and there exist deterministic integration methods,
which in this case provide error bounds with rate~$n^{-r/d}$.
These worst case bounds come into play
if we demand extremely high confidence~$1 - \delta$ close to~$1$.
It also turns out that for~$p=1$ the uncertainty~$\delta$ does not play any role,
which shows that deterministic methods are optimal in that case.
In the power of~$n$, we recover the well known gain of
$1-1/p$ for~$1 < p < 2$,
and $1/2$ for $p \geq 2$,
which Monte Carlo methods achieve compared to deterministic methods.
The influence of the uncertainty $\delta$ grows with the gain in the error rate.
In terms of the complexity~\eqref{eq:nrate-intro} we observe that
the higher the smoothness~$r$ the weaker the dependence on~$\delta$.

\textbf{Asymptotic notation:}
  For functions $e,f \colon \N\times (0,1) \to \R$
  we use the notation
  $e(n,\delta) \preceq f(n,\delta)$,
  meaning that there is some $n_0\in\N$ and $\delta_0\in(0,1)$ such that
  $e(n,\delta) \leq c f(n,\delta)$
  with some (possibly $(d,r)$-dependent) constant~$c > 0$
  for all $n \geq n_0$
  and $\delta \in (0,\delta_0)$.
  % with some $n_0\in\N$ and $\delta_0\in(0,1)$.
  Sometimes we add the restriction $n \succeq \log \delta^{-1}$,
  then $e(n,\delta) \leq c f(n,\delta)$ is only meant to hold for
  $\delta \in (0,\delta_0)$ and $n \geq n_0 \log\delta^{-1}$.
  Similarly we denote asymptotics for complexity functions $n(\eps,\delta)$,
  describing a behaviour for small~$\eps,\delta > 0$.
  Asymptotic equivalence $e(n,\delta) \asymp f(n,\delta)$
  is a shorthand for \mbox{$e(n,\delta)\preceq f(n,\delta) \preceq e(n,\delta)$}.
  The notion $e(n,\delta) \prec f(n,\delta)$ means
  ``$e(n,\delta) \preceq f(n,\delta)$ but not $f(n,\delta) \preceq e(n,\delta)$''.

\section{Lower bounds}
\label{sec:LBs}

We start with the lower bounds
as these are easily obtained for the whole parameter range
of the function spaces we consider.

\subsection{Auxiliary lemmas} \label{sec: aux_lem}
As before, let $\Wspace$ be a space of functions defined on a domain~$G$,
equipped with a (semi-)norm $\|\cdot\|_{\Wspace}$.
An abstract Monte Carlo algorithm defined for such functions
is a family $A_n = (A_n^{\omega})_{\omega \in \Omega}$
of mappings
$A_n^{\omega} \colon
  \Wspace \xrightarrow{N^{\omega}} \R^n
          \xrightarrow{\phi^{\omega}} \R$,
          indexed with elements~$\omega$ from a probability space~$(\Omega,\Sigma,\P)$,
such that the error functional
\mbox{$\omega \mapsto |A_n^{\omega}(f) - \Int{f}|$} is measurable.
Here,
\begin{equation*}
  \vecy = N^{\omega}(f) = (f(\vecx_1^{\omega}),\ldots,f(\vecx_n^{\omega}))
\end{equation*}
is the information we collect about a problem instance~$f$,
from which the output $A_n^{\omega}(f) = \phi^{\omega}(\vecy)$ is generated.
One might consider adaptive strategies to acquire information,
that is, 
$\vecx_i^{\omega}$  
might depend on the
previously obtained information $y_1,\ldots,y_{i-1}$.
Our lower bounds do hold for this type of algorithms,
but the upper bounds we present are based on non-adaptive methods.
For simplicity, in this paper we restrict to methods
with fixed cardinality~$n$.
In general, the number of function values an algorithm collects
might be random and even depend on the input,
see for instance~\cite{GNP13,Hu17Bernoulli,KNR18}. Let us mention here that
our auxiliary lemmas on lower bounds, Lemma~\ref{lem:aux1} and \ref{lem:aux2},
would then still hold with slightly worse constants.

In the spirit of Bakhvalov~\cite{Bakh59},
for proving lower bounds
we switch to an \emph{average input setting}
with a discrete probability measure~$\mu$
supported within the input set---%
which in our case is the unit ball $\Ball_\Wspace$ of the space~$\Wspace$---%
and make use of the relation
\begin{align}
  \sup_{\|f\|_{\Wspace}\leq 1} \P\{|A_n(f) - \Int f| > \eps\}
    &\geq \int_{\Ball_\Wspace} \int_{\Omega}
              \ind_{\{|A_n^{\omega}(f) - \Int f| > \eps\}}
            \dint\P(\omega) \dint\mu(f)
       \nonumber\\
    &= \int_{\Omega} \int_{\Ball_\Wspace}
            \ind_{\{|A_n^{\omega}(f) - \Int f| > \eps\}}
          \dint\mu(f) \dint\P(\omega)
       \nonumber\\
    &\geq \inf_{Q_n} \mu\{f \colon |Q_n(f) - \Int f| > \eps\} \,,
    \label{eq:Bakh}
\end{align}
where the infimum is taken
over all \emph{deterministic} integration methods $Q_n$ that use $n$ function values.
(For fixed~$\omega$, the realisation $A_n^{\omega}$ of a given algorithm
can be regarded as a deterministic algorithm.)
In the proof of the lower bounds we use the implication
\begin{equation} \label{eq:delta<->eps}
  \sup_{\|f\|_{\Wspace} \leq 1} \P\{|A_n(f) - \Int(f)| > \eps\}
    > \delta
  \quad \Longrightarrow \quad
   e^{\MC}_{\prob}(A_n,\delta,\Wspace) \geq \eps \,,
\end{equation}
where $e^{\MC}_{\prob}(A_n,\delta,\Wspace)$ is the infimum of all~$\eps > 0$
such that the algorithm~$A_n$ is $(\eps,\delta)$-approximating in $\Wspace$.

Depending on the integrability index~$p$ of the Sobolev classes
we choose different probability measures~$\mu$
in order to obtain appropriate lower bounds.
Similarly to \cite[Proposition~1 and 2 in Section~2.2.4]{No88}
we have the following two generic lemmas,
now for the probabilistic instead of the root mean squared error.
The first one applies for integrability~$2 \leq p \leq \infty$.

\begin{lem}  \label{lem:aux1}
  For $n \geq 17$ % $n \geq 9$ %---> \log 1/(4\delta)
  and a natural number $N \geq 5n + 6$,
  assume that there are functions \mbox{$f_i \colon G \to \R$},
  with~$i=1,\ldots,N$,
  satisfying the following conditions:
  \begin{enumerate}
    \item for $i = 1,\dots,N$,
      the sets~$G_i := \{\vecx \in G \colon f_i(\vecx) \not= 0\}$
      are pairwise disjoint,
      and $\Int(f_i) = \gamma$ for some $\gamma > 0$;
    \item for signs \mbox{$s_i \in \{\pm 1\}$},
      the function~\mbox{$f_{\vecs} := \sum_{i = 1}^N s_i \, f_i$}
      is an element of the input set $\Ball_\Wspace$,
      that is, $\|f_{\vecs}\|_{\Wspace} \leq 1$.
  \end{enumerate}
  Then, for any uncertainty level 
  \mbox{$0 < \delta < 1/3$},
  we have
  \begin{equation*}
    e^{\MC}_{\prob}(n,\delta,\Wspace)
      \geq \gamma \, \min\left\{
                        \,n^{1/2}
                         \,\sqrt{\log_4\frac{1}{3\delta}} ,\,
                      n
                \right\}\,.
  \end{equation*}
\end{lem}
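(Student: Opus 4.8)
The plan is to use the averaging (Bakhvalov) principle from \eqref{eq:Bakh}: it suffices to exhibit a probability measure $\mu$ on the unit ball such that every deterministic method $Q_n$ using $n$ function values errs by more than the claimed $\eps$ with $\mu$-probability exceeding $\delta$, and then invoke \eqref{eq:delta<->eps}. The natural choice, following Bakhvalov and \cite{No88}, is to let $\mu$ be the law of $f_{\vecs} = \sum_{i=1}^N s_i f_i$ where the signs $s_i$ are i.i.d.\ uniform on $\{\pm 1\}$; by hypothesis~(ii) this is supported on $\Ball_\Wspace$. A deterministic method $Q_n$ samples $f_{\vecs}$ at $n$ points $\vecx_1,\dots,\vecx_n$ (adaptively), and since the supports $G_i$ are pairwise disjoint, these $n$ points can ``touch'' at most $n$ of the $N$ bumps; on the remaining $N' := N - n \geq 4n + 6$ indices the algorithm has no information about $s_i$. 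Conditioning on the observed information, the true integral $\Int(f_{\vecs}) = \gamma \sum_{i=1}^N s_i$ differs from $Q_n$'s fixed output by $\gamma$ times a sum $S := \sum_{i \in I'} s_i$ of $N'$ independent Rademacher variables (plus a constant determined by the queried signs), so the problem reduces to a lower bound on $\P\{|S - t| > \eps/\gamma\}$ uniform over the shift $t \in \R$.

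The next step is a small-ball / anti-concentration estimate for Rademacher sums: for $N'$ independent signs and any $t$, $\P\{|\,S - t\,| \le \lambda\}$ is at most something like $C\lambda / \sqrt{N'}$ for $\lambda$ up to order $\sqrt{N'}$ (via the standard bound on the maximal atom/interval probability of a sum of $\pm 1$'s, or a Berry--Esseen-type argument), and moreover $\P\{|S| > \lambda\} \ge c$ for $\lambda \lesssim \sqrt{N'}$ with $c$ bounded below — here one wants an explicit constant so that $1 - (\text{interval probability}) > \delta$ can be arranged. Concretely, I expect to pick $\eps = \gamma \cdot \lambda$ with $\lambda \asymp \sqrt{N' \log(1/\delta)}$ as long as this does not exceed $c' N'$ (the regime where the sum can be that large with probability $\ge \delta$), and otherwise $\lambda \asymp N'$ in the ``large deviation saturates'' regime; by the Rademacher tail bound $\P\{|S| \ge \lambda\} \ge \exp(-\lambda^2 / (2N'))$ (up to constants, for $\lambda$ in the Gaussian range) one gets $\P\{|S| \ge \lambda\} \ge 3\delta$ precisely when $\lambda \lesssim \sqrt{N' \log(1/(3\delta))}$, and then a symmetry/shift argument ($\P\{|S-t|>\lambda\} \ge \tfrac12\P\{|S|>\lambda\}$, since $S$ is symmetric, or more carefully $\max_{\pm}\P\{\pm(S-t)>\lambda\}$) upgrades this to the shifted statement with $\delta$ in place of $3\delta$. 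Translating $N' \asymp N \asymp n$ (using $N \ge 5n+6$, which makes $N' \ge 4n+6$ and keeps all these $\asymp n$) and reading off $\eps$ yields $e^{\MC}_{\prob}(n,\delta,\Wspace) \ge \gamma\min\{\sqrt{n\log_4(1/(3\delta))},\, n\}$, with the $\log_4$ and the constant $3$ emerging from the precise form of the Rademacher tail inequality used.

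The main obstacle is the anti-concentration step done with \emph{explicit} constants: one needs both a lower bound $\P\{|S| > \lambda\} \ge 3\delta$ valid for $\lambda$ up to $c\sqrt{N\log(1/(3\delta))}$ \emph{and} control of the shift by $t$ (the deterministic output after seeing $n$ signs). The cleanest route for the shift is to note $S$ is symmetric about $0$, so for any $t$ at least one of $\P\{S - t > \lambda\}$, $\P\{S - t < -\lambda\}$ is $\ge \tfrac12 \P\{|S| > \lambda\}$; combined with a Hoeffding-type reverse bound (or the exact reflection identity $\P\{S \ge \lambda\} = \P\{\text{Binomial}(N',1/2) \ge (N'+\lambda)/2\}$ and Stirling) this gives a clean inequality. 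The constants $17$ and $5n+6$ in the hypotheses are exactly the slack needed to absorb the loss from ``$n$ bumps touched'' and from passing between $3\delta$ and $\delta$; I would choose them at the end to make the final bound come out in the stated form. One should also check the measurability condition implicit in the averaging argument, which is immediate here since $\Omega$ for the derandomized method is trivial and $\mu$ is finitely supported.
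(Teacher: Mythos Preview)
Your proposal is correct and follows essentially the same route as the paper: Bakhvalov averaging over uniform random signs, reduction to $\inf_{a}\P\{|X_k-a|>\eps/\gamma\}$ for a Rademacher sum $X_k$ with $k=N-n\geq 4n+6$, and then a quantitative lower bound on the binomial tail. The only substantive difference is that where you gesture at a ``reverse Hoeffding'' or Stirling estimate and flag the explicit constants as the main obstacle, the paper supplies exactly that ingredient as a separate appendix lemma (Lemma~\ref{lem:BinomTail1}), which gives $2^{-k}\sum_{j\leq \lfloor k/2\rfloor-t}\binom{k}{j}\geq c\exp(-16(\log 2)\,t^2/k)$ and is the source of the constants $n\geq 17$, $N\geq 5n+6$, the base~$4$ in $\log_4$, and the threshold~$1/3$; your symmetry argument for handling the shift~$t$ is replaced in the paper by the direct combinatorial observation that any interval of length $2\eps/\gamma$ covers at most $\lfloor\eps/\gamma\rfloor+1$ lattice points of $X_k$, optimally the central ones.
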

\begin{proof}
  Let $\mu$ be the uniform distribution on the finite set
  \begin{equation*}
    \Finput
      := \left\{f_{\vecs} = \sum_{i=1}^N s_i f_i
                \colon
                  s_i \in \{\pm 1\}
          \right\}
      \subset \Ball_\Wspace \,.
  \end{equation*}
  Let~$Q_n \colon \Finput \to \R$ be a deterministic algorithm using
  $n$~function values.
  Without loss of generality, we may assume
  that the algorithm computes function values \mbox{$y_i = f(\vecx_i)$}
  with~\mbox{$\vecx_i \in G_i$} for~$i=1,\ldots,n$.
  Hence, from the $i$-th piece of information we learn
  whether~$s_i = +1$ or~$-1$ for $f = f_{\vecs}$.

  Note  that,
  given the information~$\vecy = N(f)$,
  there are still~$k := N-n$ unknown signs~$s_i$.
  The conditional distribution of 
  $\Int(f)$ given $\vecy$
  can be represented as the distribution of
  \begin{equation*}
    g_{\vecy} + \gamma X_k
  \end{equation*}
  where
  \begin{align*}
    g_{\vecy}
      &:= \gamma \, \sum_{i = 1}^n
    s_i \,, \qquad\text{and} &
    X_k
      &:= \sum_{i=1}^k Z_i
    \qquad\text{ with $Z_i \stackrel{\text{iid}}{\sim}
                  \operatorname{Rademacher}$.}
  \end{align*}
  Since this is the situation for all information
%   $\vecy \leftrightarrow (s_1,\ldots,s_n)$,
% Daniel: Idealerweise definieren wir das Symbol mit dem Doppelpfeil, aber irgendwie
% wollen wir das doch nicht. Wie waere es mit folgendem:
  $\vecy \,\widehat{=}\,  (s_1,\ldots,s_n)$,
  we obtain 
  \begin{align*}
    \nonumber\mu\{|Q_n(f) - \Int f| > \eps\}% \mid N(f) = \vecy)
      &\,\geq\, \inf_{a \in \R} \P\{\gamma \, |X_k - a| > \eps\}\\
    \nonumber
      &\,=\, \inf_{a \in \R}
                2^{-k} \sum_{j=0}^k
                  \binom{k}{j} \,
              \ind\{\gamma \, |2j-k - a| > \eps\}\,.
  \end{align*}
  At most \mbox{$k' := \lfloor \eps/\gamma \rfloor + 1$} terms are removed
  from the binomial sum, optimally the central ones,
  so we have
  \begin{align*}
  \nonumber\mu\{|Q_n(f) - \Int f| > \eps\} 
      &\,\geq\, 2^{-k}
              \Biggl[\sum_{j=0}^{\left\lfloor \frac{k-k'}{2} \right\rfloor}
                        \binom{k}{j}
                    + \sum_{j=\left\lceil \frac{k+k'+1}{2} \right\rceil}^k
                        \binom{k}{j}
              \Biggr].
  \end{align*}
  We employ Lemma~\ref{lem:BinomTail1} twice, namely,
  for odd $k$ 
  with 
  \begin{equation*}
    t = \lceil (k'-1)/2 \rceil
   \quad \text{and} \quad
   t = \lceil k'/2 \rceil \leq \eps/(2\gamma) + 1
  \end{equation*}
%   and~\mbox{$t = \lceil k'/2 \rceil \leq \eps/(2\gamma) + 1$}
  as well as for even~$k$ with~
  \begin{equation*}
    t = \lceil k'/2 \rceil
   \quad \text{and} \quad
    t = \lceil (k'+1)/2 \rceil \leq \eps/(2\gamma) + 3/2.
  \end{equation*}
%   \mbox{$t = \lceil k'/2 \rceil$}
%   and \mbox{$t = \lceil (k'+1)/2 \rceil \leq \eps/(2\gamma) + 3/2$}.
  In order to match the conditions of Lemma~\ref{lem:BinomTail1},
  we restrict to \mbox{$\eps/\gamma \leq (k-6)/4$}.
  Hence under that assumption we have
%   If so, we find the lower bound
  \begin{align}
    \label{eq:|Qf-INTf|>eps|y,1}
    \mu\{|Q_n(f) - \Int f| > \eps\}% \mid N(f) = \vecy)
      &\,\geq\, \frac{1}{1 + 2/\sqrt{\pi}}
        \, \exp\left(- \frac{4 \, (\log 2) \, (\eps/\gamma + 2)^2}{k}
                \right)\,.
  \end{align}
  Note that~\mbox{$k = N - n \geq (5n + 6) - n = 4n + 6$},
  so then the condition \mbox{$\eps/\gamma \leq n$}
  is sufficient for \eqref{eq:|Qf-INTf|>eps|y,1} to hold.
        The right-hand side of \eqref{eq:|Qf-INTf|>eps|y,1}
  can be further simplified via
% ACHTUNG, da war ein Typo, da stand als Vorfaktor \sqrt{2} statt 2!
  \mbox{$(\eps/\gamma + 2)^2 \leq 2 \,(\eps^2/\gamma^2 + 4)$},
  exploiting~\mbox{$k > 4n$}
%  together with $\sqrt{2} \log 2 < 1$,
  and also \mbox{$n \geq 17$}. %n \geq 9
  For~\mbox{$0 < \eps \leq \gamma n$}
  this leads to
  \begin{equation} \label{eq:delta(2n,mu)>c*exp(-C*eps^2/n)}
    \mu\{|Q_n(f) - \Int f| > \eps\}
      > 
      \frac{2^{-8/n}}{1 + 2/\sqrt{\pi}}
         \, 2^{-2\,\eps^2 / (n \, \gamma^2)}
      > \frac{1}{3} \, 4^{-\eps^2 / (n \, \gamma^2)} \,.
  \end{equation}
  By Bakhvalov's trick~\eqref{eq:Bakh}
  this is a lower bound for the worst case uncertainty
  $\sup_{\|f\|_{\Wspace} \leq 1} \P\{|A_n(f) - \Int(f)| > \eps\}$,
  holding for any Monte Carlo algorithm~$A_n$.
  Regarding the right-hand side of~\eqref{eq:delta(2n,mu)>c*exp(-C*eps^2/n)}
  as a given $\delta$,
  the implication~\eqref{eq:delta<->eps} finally provides the assertion.
  Pay attention that for too small~$\delta$,
  namely $0 < \delta < \frac{1}{3} \, 4^{-n}$,
  isolating~$\eps$ in \eqref{eq:delta(2n,mu)>c*exp(-C*eps^2/n)}
  is misleading to delusive error bounds exceeding $\gamma n$
  which, however, violates the conditions on~$\eps$.
  In this case we can only conclude that $\eps = \gamma n$ is a lower bound
  for~$e^{\MC}_{\prob}(n,\delta,\Wspace)$.
\end{proof}

The following result will be useful for integrability $1 < p < 2$.

\begin{lem} \label{lem:aux2}
  For $n \in \N$ and a natural number $N \geq 4 n$,
  assume that there are functions \mbox{$f_i \colon G \to \R$},
  with~$i=1,\ldots,N$,
  satisfying the following conditions:
  \begin{enumerate}
    \item for $i = 1,\dots,N$,
      the sets~$G_i := \{\vecx \in G \colon f_i(\vecx) \not= 0\}$
      are pairwise disjoint,
      and $\Int(f_i) = \gamma$ for some $\gamma > 0$;
    \item for $I \subset \{1,\ldots,N\}$ with $\# I = M$
      for some given natural number $M \leq N$,
      and for signs \mbox{$s_i \in \{\pm 1\}$},
      the function~\mbox{$f_{I,\vecs} := \sum_{i \in I} s_i \, f_i$}
      is an element
      of the input set~$\Ball_\Wspace$,
      that is, $\|f_{I,\vecs}\|_{\Wspace} \leq 1$.
  \end{enumerate}
  Then, for any~$0 < \delta < \frac{1}{2} \, 2^{-\lceil M/2 \rceil}$,
  we have
  \begin{equation*}
    e^{\MC}_{\prob}(n,\delta,\Wspace)
      \geq {\textstyle \frac{1}{2}} \gamma M \,.
  \end{equation*}
\end{lem}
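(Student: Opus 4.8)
The plan is to mimic the Bakhvalov-type argument of Lemma~\ref{lem:aux1}, but with a probability measure~$\mu$ that is concentrated on functions built from exactly~$M$ of the bumps~$f_i$, with random signs and a random choice of the support set~$I$. Concretely, I would let~$\mu$ be the uniform distribution on
\begin{equation*}
  \Finput := \left\{ f_{I,\vecs} = \sum_{i \in I} s_i f_i
             \;\middle|\; I \subseteq \{1,\dots,N\},\ \# I = M,\ \vecs \in \{\pm1\}^I \right\}
          \subset \Ball_\Wspace \,,
\end{equation*}
which is legitimate by condition~(2). By the relation~\eqref{eq:Bakh} it suffices to bound from below $\mu\{f \colon |Q_n(f) - \Int f| > \eps\}$ for an arbitrary deterministic method~$Q_n$ using~$n$ function values, and then invoke~\eqref{eq:delta<->eps}. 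As in the previous lemma I would argue that, without loss of generality, $Q_n$ queries points~$\vecx_1,\dots,\vecx_n$ lying in distinct sets~$G_{j_1},\dots,G_{j_n}$; each query either reveals that~$j_\ell \notin I$ (value~$0$) or reveals~$j_\ell \in I$ together with the sign~$s_{j_\ell}$.

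The key probabilistic step is to show that, conditionally on \emph{any} information vector~$\vecy = N(f)$, the integral~$\Int f$ is still spread out over a range of width~$\succeq \gamma M$. Given~$\vecy$, let~$m \le \min\{n,M\}$ be the number of queried bumps that turned out to lie in~$I$; then there remain~$M - m$ ``hidden'' bumps whose indices form a uniformly random $(M-m)$-subset of the~$N - n$ unqueried indices, each carrying an independent Rademacher sign. Hence the conditional law of~$\Int f$ is that of~$\gamma\big(\sum_{\text{revealed}} s_i\big) + \gamma \sum_{j=1}^{M-m} Z_j$ with~$Z_j$ i.i.d.\ Rademacher — the sign randomness alone already gives, for any fixed center~$a\in\R$,
\begin{equation*}
  \P\Big\{ \gamma\,\big| {\textstyle\sum_{j=1}^{M-m}} Z_j - a \big| > \eps \Big\}
    \;\ge\; 2^{-(M-m)}
\end{equation*}
whenever~$\eps < \gamma(M-m)$, because at least one of the two extreme values~$\pm(M-m)$ is at distance~$> \eps/\gamma$ from~$a$. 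Since~$m \le n$ and~$N \ge 4n$ forces (after also using the constraint that comes out of the final $\delta$-bound) enough hidden bumps to survive, taking~$\eps$ slightly below~$\tfrac12\gamma M$ and noting~$M - m \ge \lceil M/2\rceil$ in the regime that matters, we get~$\mu\{|Q_n(f)-\Int f| > \tfrac12\gamma M\} \ge 2^{-\lceil M/2\rceil} > 2\delta$, which via~\eqref{eq:Bakh} and~\eqref{eq:delta<->eps} yields the claimed bound~$e^{\MC}_{\prob}(n,\delta,\Wspace) \ge \tfrac12\gamma M$.

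The step I expect to be the main obstacle is controlling how much a query can ``help'': I need to be sure that discovering~$m$ of the support bumps does not collapse the conditional spread below~$\gamma M/2$. The clean way is to observe that a query is only informative if it hits an index of~$I$, this happens for at most~$n$ of the queries, and — crucially — the worst case for the lower bound is not when many queries hit~$I$ but rather a balance between the number of revealed bumps and the number of hidden ones; one has to check that, on the event that~$m$ bumps are revealed, the remaining~$M-m \ge M-n$ hidden bumps still number at least~$\lceil M/2\rceil$ in the relevant range, and that the probability of landing in a ``helpful'' information atom where~$m$ is large is itself small. I would handle this by splitting according to~$m$ and noting that the bound~$\P(|\cdot| > \eps) \ge 2^{-(M-m)}$ deteriorates only geometrically in~$m$, while the trivial estimate~$M - m \ge \lceil M/2\rceil$ holds once~$m \le \lfloor M/2\rfloor$; the hypothesis on~$\delta$ is exactly calibrated so that we may restrict attention to this range. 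The remaining manipulations — reducing to queries in distinct~$G_i$, the measurability bookkeeping, and isolating~$\eps$ — are routine and parallel to the proof of Lemma~\ref{lem:aux1}.
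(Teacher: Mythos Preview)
Your setup (the measure~$\mu$, the reduction to deterministic~$Q_n$, the conditioning on the number~$m$ of discovered bumps) matches the paper exactly, but the central anti-concentration step is too weak and the inequality you derive from it points the wrong way.

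You bound the conditional failure probability by $2^{-(M-m)}$, using only that one of the two extreme Rademacher outcomes $\pm(M-m)$ lies outside any interval of length~$<2(M-m)$. This bound is \emph{smallest} when $m$ is small; in the typical regime $m=0$ it gives only $2^{-M}$. Your sentence ``noting $M-m\ge\lceil M/2\rceil$'' then yields $2^{-(M-m)}\le 2^{-\lceil M/2\rceil}$, which is the wrong direction for a lower bound. The remark that the bound ``deteriorates only geometrically in~$m$'' is also reversed: it \emph{improves} as~$m$ grows, but the hypergeometric mean $\expect m = nM/N\le M/4$ forces~$m$ to be small with high probability, so you are stuck with the weak end. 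Summing $\sum_m \mu\{m(f)=m\}\,2^{-(M-m)}=2^{-M}\,\expect\,2^{m(f)}$ does not recover~$2^{-\lceil M/2\rceil}$ either.

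The missing ingredient is a conditional bound that depends on~$\eps$ rather than on~$m$: for a sum of $k=M-m$ independent Rademacher variables and any real~$a$, the probability of landing outside $[a-\eps/\gamma,\,a+\eps/\gamma]$ is at least $2^{-k'}$ with $k':=\lfloor\eps/\gamma\rfloor+1$, provided $k\ge k'$. (This is the content of the paper's Lemma~\ref{lem:BinomTail2}: removing the $k'$ largest binomial coefficients from $\sum_{j=0}^k\binom{k}{j}$ still leaves at least $2^{k-k'}$.) With $\eps$ just below $\tfrac12\gamma M$ one has $k'\le\lceil M/2\rceil$. Then the hypergeometric/Markov step you allude to --- $\mu\{m\le M/2\}\ge 1/2$ since $\expect m\le M/4$ --- guarantees $k=M-m\ge\lceil M/2\rceil\ge k'$ on an event of $\mu$-probability at least~$1/2$, and combining gives
\[
  \mu\bigl\{|Q_n(f)-\Int f|>\eps\bigr\}\;\ge\;\tfrac12\cdot 2^{-\lceil M/2\rceil}\;>\;\delta,
\]
which is the bound stated in the lemma.
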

\begin{proof}
  Let $\mu$ be the uniform distribution on the finite set
  \begin{equation*}
    \Finput
      := \left\{\sum_{i \in I} s_i f_i
                \colon
                \text{$I \subset \{1,\ldots,N\}$ with $\# I = M$,
                      $s_i \in \{\pm 1\}$}
        \right\}
      \subset \Ball_\Wspace \,.
  \end{equation*}
  Let~$Q_n \colon \Finput \to \R$ be a deterministic algorithm using
  $n$~function values.
  Without loss of generality, we may assume
  that the algorithm computes function values \mbox{$y_i = f(\vecx_i)$}
  with~\mbox{$\vecx_i \in G_i$} for~$i=1,\ldots,n$.
  Hence, from the $i$-th piece of information we learn whether~$i \in I$,
  and if so, whether~$s_i = +1$ or~$-1$ within the representation
  \begin{equation*}
    f \,=\, f_{I,\vecs}
      \,:=\, \sum_{i \in I} s_i f_i
      \,.
  \end{equation*}
  Let $m(f) := \# (I \cap \{1,\ldots,n\})$ denote the number of
  detected subdomains~$G_i$ where the function~$f$ is non-zero.
  Under~$\mu$,
  the random variable $m(\cdot)$  
  is distributed according to a hypergeometric distribution
  with population of size~$N$ containing~$M \leq N$ items of interest
  and admitting~$n<N$  %$n' < N$
  draws without replacement.
  The expected value is
  \begin{equation*}
    \int_{\Finput} m(f) \dint\mu(f)
      \,=\, \frac{n}{N} \, M
      \,\leq\, \frac{1}{4} \, M \,,
  \end{equation*}
  and using Markov's inequality
  we conclude
  \begin{equation} \label{eq:m<=M/2}
    \mu\left\{f \colon m(f) \leq \fracts{1}{2} M \right\}
      \,\geq\, \frac{1}{2} \,.
  \end{equation}
  Given the information~$(f(\vecx_1),\ldots,f(\vecx_n)) = \vecy$ with $m(f) = m$,
  there are still \mbox{$k := M-m$} unknown~$s_i$ for subdomains~$G_i$
  where the function does not vanish
  and the conditional distribution of $\Int(f)$ 
  is given similarly to the proof of Lemma~\ref{lem:aux1}
  (only $n$ is substituted by~$m$).
  Hence, the conditional uncertainty can be quantified via a binomial sum. 
  For $0 < \eps \leq \frac{1}{2} M \gamma$, 
  up to \mbox{$k' := \lfloor \eps/\gamma \rfloor + 1\leq \lceil \frac{1}{2} M \rceil$}
  terms are removed, and we obtain
  \begin{align*} \label{eq:|Sf-Qf|>eps|y,m}
    \mu\bigl(|Q_n(f) - &\Int(f)| > \eps \;\big|\;
             (f(\vecx_1),\ldots,f(\vecx_n)) = \vecy,\,
             m(f)=m\bigr) \\
    &\geq\, 
      2^{-k} \Biggl[\sum_{j=0}^{\left\lfloor \frac{k-k'}{2}
              \right\rfloor}
              \binom{k}{j}
            + \sum_{j=\left\lceil \frac{k+k'+1}{2} \right\rceil}^k
              \binom{k}{j}
                \Biggr].
  \end{align*}
  This bound is the same for all information outcomes $\vecy$
  with the same number $m(f) = m$ of detected non-zero subdomains,
  and for $k \geq k'$, by Lemma~\ref{lem:BinomTail2}, we further estimate
  \begin{equation}\label{eq:|Sf-Qf|>eps|y,m}
    \mu\left(|Q_n(f) - \Int(f)| > \eps \mid  m(f)=m\right) \geq 2^{-k'}.
  \end{equation}
  Note that~$m \leq \frac{1}{2} \, M$
  implies~\mbox{$k = M - m
                  \geq \lceil \frac{1}{2} \, M \rceil
                  \geq k'
                  $},
  so \eqref{eq:|Sf-Qf|>eps|y,m} can be used
  under the condition formulated in \eqref{eq:m<=M/2}.
  Hence,
  \begin{align*}
    \mu\{|Q_n(f) - &\Int(f)| > \eps\} \\
      &\,\geq\, \sum_{m=0}^{\lceil M/2 \rceil}
              \mu\left(|Q_n(f) - \Int(f)| > \eps\mid m(f)=m\right)
                \cdot \mu\{ f\colon m(f)=m \} \\
      &\stackrel{\eqref{eq:|Sf-Qf|>eps|y,m}}{\,\geq\,}
        2^{-k'} \cdot \mu\{f\colon m(f) \leq \fracts{1}{2} M\}
      \,\geq\, \frac{1}{2} \, 2^{-\lceil M / 2 \rceil}.
  \end{align*}
  By Bakhvalov's trick~\eqref{eq:Bakh}
  this is a lower bound for the worst case uncertainty
  \mbox{$\sup_{\|f\|_{\Wspace} \leq 1} \P\{|A_n(f) - \Int(f)| > \eps\}$},
  and for $0 < \delta < \frac{1}{2} \, 2^{-\lceil M / 2 \rceil}$
  the implication~\eqref{eq:delta<->eps} proves the assertion.
\end{proof}

\subsection{Lower bounds for Sobolev classes}

The norms of classical Sobolev spaces $W_p^r(G)$
with \emph{integrability index~$p$}
possess the property that for any decomposition
of the support of a function~$f \in W_p^r(G)$
into essentially disjoint sub-domains, say $G_1,\dots,G_M$,
we have
\begin{equation} \label{eq:integrability}
  \left\|f\right\|_{W_p^r(G)}
    = \left(\sum_{i=1}^M \left\|f\right\|_{W_p^r(G_i)}^p\right)^{1/p}
    \leq M^{1/p} \, \max_{i=1,\ldots,M} \|f\|_{W_p^r(G_i)} \,.
\end{equation}
The \emph{smoothness} has an effect on scalings,
namely for functions $\varphi \colon \R^d \to \R$
and $\psi(\vecx) := \varphi(m \vecx - \veci)$, with $m > 0$ and $\veci \in \R^d$,
we have the following well known relation between the derivatives,
\begin{equation*}
  \|D^{\vecalpha}\psi\|_{L_p(\R^d)}
    = m^{|\vecalpha|_1 - d/p} \, \|D^{\vecalpha}\varphi\|_{L_p(\R^d)}\,,
  \quad \text{for $\vecalpha \in \N_0^d$.}
\end{equation*}
For~$m \geq 1$ this leads to the scaling property
\begin{equation} \label{eq:scaling}
  \|\psi\|_{W_p^r(\R^d)}
    \leq m^{r - d/p} \, \|\varphi\|_{W_p^r(\R^d)} \,.
\end{equation}
If $\supp \varphi, \supp \psi \subseteq G$,
then this relation holds also for the norms
of the restricted space~$W_p^r(G)$. 
\begin{thm} \label{thm:LBs}
  Let $G \subset \R^d$ be a domain with nonempty interior.
  Further, let $r \in \N_0$, $1 \leq p \leq \infty$ and define~$q := \min\{p,2\}$.
  Then we have the asymptotic lower bound
  \begin{equation*}
    e^{\MC}_{\prob}(n,\delta,W_p^r(G))
      \succeq \min\left\{ n^{-r/d},\,
                          n^{-(r/d + 1-1/q)} \, (\log\delta^{-1})^{1-1/q}
                  \right\} \,.
  \end{equation*}
\end{thm}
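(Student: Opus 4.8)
The plan is to construct explicit families of bump functions for each regime of $p$ and apply the two auxiliary lemmas. The building block is a fixed smooth bump $\varphi \colon \R^d \to \R$ with compact support in a small cube inside~$G$, normalized so that $\int \varphi = \gamma_0 > 0$ and $\|\varphi\|_{W_p^r(\R^d)} \leq 1$. Given a scale parameter $m \in \N$, we tile (a fixed subcube of) $G$ by roughly $N \asymp m^d$ disjoint translated-and-rescaled copies $f_i(\vecx) := c\,\varphi(m\vecx - \veci)$ on pairwise disjoint subdomains $G_i$. By the scaling relation~\eqref{eq:scaling}, each $f_i$ satisfies $\|f_i\|_{W_p^r(G)} \leq c\,m^{r-d/p}$, and $\Int(f_i) = \gamma := c\,\gamma_0\,m^{-d}$.

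\textbf{Case $2 \le p \le \infty$ (so $q=2$).} Here I would invoke Lemma~\ref{lem:aux1}. The sign-combined function $f_{\vecs} = \sum_{i=1}^N s_i f_i$ has, by the integrability property~\eqref{eq:integrability}, norm $\|f_{\vecs}\|_{W_p^r(G)} \leq N^{1/p}\max_i \|f_i\|_{W_p^r(G_i)} \asymp m^{d/p}\cdot c\,m^{r-d/p} = c\,m^r$; choosing $c \asymp m^{-r}$ makes this $\leq 1$. With $N \asymp m^d$ and requiring $N \geq 5n+6$, i.e. $m \asymp n^{1/d}$ up to a constant, Lemma~\ref{lem:aux1} yields, for $\delta < 1/3$,
\begin{equation*}
  e^{\MC}_{\prob}(n,\delta,W_p^r(G))
    \succeq \gamma \min\{ n^{1/2}\sqrt{\log_4(1/(3\delta))},\, n \}
    \asymp c\,m^{-d}\min\{ n^{1/2}(\log\delta^{-1})^{1/2},\, n \}.
\end{equation*}
Since $c\,m^{-d} \asymp m^{-r-d} \asymp n^{-(r+d)/d}$, the bracket multiplies this by either $n^{1/2}(\log\delta^{-1})^{1/2}$ or $n$, giving exactly $n^{-(r/d+1/2)}(\log\delta^{-1})^{1/2}$ in the first regime and $n^{-r/d}$ in the second — matching the claimed minimum with $1-1/q = 1/2$.

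\textbf{Case $1 < p < 2$ (so $q=p$).} Now I would use Lemma~\ref{lem:aux2} with a sparse selection parameter $M \leq N$. Only $M$ of the $N$ bumps are active in $f_{I,\vecs} = \sum_{i\in I} s_i f_i$, so by~\eqref{eq:integrability} its norm is $\leq M^{1/p}\cdot c\,m^{r-d/p}$; setting $c\asymp m^{-r}$ as before this is $\asymp M^{1/p} m^{-d/p} = (M/m^d)^{1/p}$. To keep this $\leq 1$ we need $M \lesssim m^d \asymp N$, which is compatible with $N \geq 4n$. The lemma requires $\delta < \tfrac12 2^{-\lceil M/2\rceil}$, i.e. $M \asymp \log\delta^{-1}$, and then gives $e^{\MC}_{\prob} \geq \tfrac12\gamma M \asymp c\,m^{-d} M \asymp n^{-(r+d)/d}\cdot\log\delta^{-1}$. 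This is the regime $n^{-(r/d+1)}\log\delta^{-1}$; but $1-1/q = 1-1/p$, so I expect we actually want $M^{1-1/p}$ many active bumps only in part and must optimize $m$ and $M$ jointly. Concretely, I would not fix $m \asymp n^{1/d}$ but instead balance: take $M \asymp \log\delta^{-1}$ active bumps, and choose the scale $m$ as large as the constraint $N \geq 4n$ and the norm bound $M^{1/p}m^{-d/p}\leq 1$ (i.e. $m^d \geq M$) permit; with $m \asymp n^{1/d}$ one gets $\gamma M \asymp n^{-r/d}\cdot n^{-1}\log\delta^{-1}$, which is smaller than the target $n^{-(r/d+1-1/p)}(\log\delta^{-1})^{1-1/p}$. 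The resolution is to use \emph{fewer, larger} bumps: pick $m$ so that $N = m^d \asymp M \asymp \log\delta^{-1}$ wait — that violates $N\geq 4n$. The correct balancing, which is the technical heart, is to split the domain into $n' \asymp n$ "cells", within a suitable fraction of which one plants a single larger bump, trading the number of bumps against their individual size; I would set the cell count and the per-cell bump scale so that the norm constraint $\|f_{I,\vecs}\|\leq 1$ is tight while $M \asymp \log\delta^{-1}$, yielding $\gamma M \asymp n^{-(r/d + 1 - 1/p)}(\log\delta^{-1})^{1-1/p}$, and in the complementary regime (when $\log\delta^{-1}\gtrsim n$) one recovers the deterministic bound $n^{-r/d}$.

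\textbf{Case $p = 1$.} Then $q = 1$ and $1-1/q = 0$, so the claimed bound is just $n^{-r/d}$, the deterministic worst-case rate; this follows from Lemma~\ref{lem:aux2} with $M$ a constant (or directly from the classical deterministic lower bound via~\eqref{eq:Bakh}), since here the integrability property gives no gain from spreading mass over many bumps. \textbf{Main obstacle.} I expect the genuinely delicate point to be the $1<p<2$ case: getting the exponents $n^{-(r/d+1-1/p)}$ and $(\log\delta^{-1})^{1-1/p}$ simultaneously requires choosing the number of bumps $N$, the number of active ones $M$, and the scale $m$ as coupled functions of $n$ and $\delta$, subject to $N \geq 4n$, $M \asymp \log\delta^{-1}$, and the norm constraint from~\eqref{eq:integrability}–\eqref{eq:scaling} being tight; verifying that such a choice is feasible (in particular that the bump support fits in~$G$ and that the $\preceq$ is uniform for $n \succeq \log\delta^{-1}$, with the flat $n^{-r/d}$ taking over otherwise) is where the care lies.
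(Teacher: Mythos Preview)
Your treatment of the case $p \geq 2$ is correct and matches the paper's argument essentially line for line.

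The gap is in the case $1 < p < 2$. You correctly set up the framework of Lemma~\ref{lem:aux2} with $N \asymp n$ subcubes and $M \asymp \log\delta^{-1}$ active bumps, and you correctly diagnose that with $c \asymp m^{-r}$ the resulting bound $\gamma M \asymp n^{-(r/d+1)}\log\delta^{-1}$ is too weak. But your attempted repair --- changing the geometry by making ``fewer, larger'' cells --- is a dead end, as you yourself notice when it collides with $N \geq 4n$. The actual fix is much simpler and purely a matter of amplitude: since only $M$ of the $N$ bumps appear in $f_{I,\vecs}$, the norm bound you computed, $\|f_{I,\vecs}\|_{W_p^r} \lesssim (M/N)^{1/p}$, is \emph{far from tight}. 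You are free to multiply each bump by the extra factor $(N/M)^{1/p}$, i.e.\ take
\[
  f_{\veci} := m^{-r}\,(N/M)^{1/p}\,\psi_{\veci},
\]
which makes the norm constraint exactly $\leq 1$. Then $\gamma = \gamma_0\,m^{-r-d+d/p}\,M^{-1/p}$, and with $m \asymp n^{1/d}$ one gets
\[
  \tfrac12\gamma M \asymp n^{-(r/d+1-1/p)}\,M^{1-1/p}
    \asymp n^{-(r/d+1-1/p)}\,(\log\delta^{-1})^{1-1/p},
\]
which is precisely the target. The ``technical heart'' you anticipated is this one-line rescaling, not a geometric rebalancing. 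For $\delta$ so small that $\log\delta^{-1} \gtrsim n$ (equivalently $M > N$ would be needed), one simply takes $M \asymp n$ instead and recovers the flat $n^{-r/d}$ bound; this is how the paper handles the complementary regime and also subsumes $p=1$.
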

\begin{proof}
  Since the interior of~$G$ is nonempty there exists a cubic subdomain.
  We restrict to functions which are supported within that rectangular subdomain,
  and by scaling, without loss of generality, we may assume $G = [0,1]^d$.
  
  Let $\varphi \colon \R^d \to \R$ be a sufficiently smooth function
supported on~$[0,1]^d$
  with $\|\varphi\|_{W_p^r([0,1]^d)} \leq 1$ 
  and \mbox{$\gamma_0 := \Int \varphi > 0$}.
  We call $\varphi$ \emph{bump function}.
  For $m\in\N$ we split $[0,1]^d$ into $N = m^d$ subcubes~$G_\veci$
  with $\veci \in [m]^d := \{0,\ldots,m-1\}^d$
  and equip each subcube with a scaled, shifted bump function
  $\psi_{\veci}(\vecx) := \varphi(m\vecx - \veci)$.

  If~$2 \leq p \leq \infty$, we choose $m := \lceil(5n+6)^{1/d}\rceil$
  and $f_{\veci} := %C^{-1}\, 
                        m^{-r} \, \psi_{\veci}$.
  Hence, by~\eqref{eq:integrability} and~\eqref{eq:scaling} with~$M = N$ we have
  \begin{equation} \label{eq:signedbumps}
    \Biggl\|\sum_{\veci \in [m]^d} s_{\veci} f_{\veci}\Biggr\|_{W_p^r([0,1]^d)} \leq 1
    \qquad \text{for arbitrary $s_{\veci} \in \{\pm 1\}$.}
  \end{equation}
Then
  $\gamma = \Int f_{\veci} = %C^{-1} \,
                             m^{-r-d} \, \gamma_0$.
  Restricting to $n\geq 17$ and $0 < \delta \leq 1/3$,
  we can apply Lemma~\ref{lem:aux1} and obtain
  \begin{align*}
    e^{\MC}_{\prob}(n,\delta,W_p^r([0,1]^d))
      &\geq %C^{-1} \,
          \gamma_0 \, m^{-r-d}
            \, \min\left\{n^{1/2} \sqrt{\log_4 \frac{1}{3\delta}},\,
                          n
                   \right\}\\
      &\succeq \min\{n^{-r/d-1/2} \sqrt{\log \delta^{-1}},
                     n^{-r/d}\} \,.
  \end{align*}
  
  If~$1 \leq p < 2$, we restrict to~$0 \leq \delta < 1/4$
  and choose \mbox{$m := \lceil(4n)^{1/d}\rceil$}.
  In the case $2^{-2n-1} \leq \delta$,
  we take $M = 2 \lceil \log_2(4\delta)^{-1}\rceil \leq 2 \log_2(2\delta)^{-1}$,
  and easily see that
  $M \leq 4n \leq N$ is fulfilled.
  Here, put~$f_{\veci} := %C^{-1} \, 
                        m^{-r} \, (N/M)^{1/p} \, \psi_{\veci}$,
 and note that by~\eqref{eq:integrability} and \eqref{eq:scaling}
  we have
  \begin{equation*}
    \Biggl\|\sum_{\veci \in I} s_{\veci} f_{\veci}\Biggr\|_{W_p^r([0,1]^d)} \leq 1 \,,
    \qquad \text{for arbitrary $s_{\veci} \in \{\pm 1\}$\;
                 and \;$I \subseteq [m]^d$ with $\# I = M$.}
  \end{equation*}
  We have~$\gamma = \Int f_{\veci} = %C^{-1} \, 
                                     m^{-r-d+d/p} \, M^{-1/p}$,
  and Lemma~\ref{lem:aux2} implies
  \begin{equation*}
    e^{\MC}_{\prob}(n,\delta,W_p^r([0,1]^d))
      \geq \fracts{1}{2} \, \gamma_0 \, %C^{-1} \, 
                            m^{-r-d+d/p} \, M^{1-1/p}
      \succeq n^{-(r/d+1-1/p)} \left(\log \delta^{-1}\right)^{1-1/p} \,.
  \end{equation*}
  For small $\delta \in (0,2^{-2n-1})$, however,
  we may just choose~$M = 4n$, and similarly we obtain
  \begin{equation*}
    e^{\MC}_{\prob}(n,\delta,W_p^r([0,1]^d))
      \succeq n^{-r/d} \,,
  \end{equation*}
  which finishes the proof.
\end{proof}

\begin{rem}[Lower bounds for non-integer smoothness] \label{rem:frac-smooth}
  There are several approaches to generalize Sobolev spaces
  for non-integer smoothness $r > 0$.
  For example the \emph{Slobedeckii space} $W_p^r(G)$
  is given as the set of functions with finite norm
  \begin{equation*}
    \|f\|_{W_p^r(G)}
      := \left(\|f\|_{W_p^{\lfloor r \rfloor}(G)}^p
                 + \sum_{|\vecalpha|_1 = \lfloor r \rfloor}
                     \int_G \int_G
                       \frac{|D^{\vecalpha}f(\vecx) - D^{\vecalpha}f(\vecz)|^p
                           }{|\vecx-\vecz|^{d+(r-\lfloor r \rfloor)p}}
                       \dint\vecx \dint\vecz
         \right)^{1/p} \,,
  \end{equation*}
  where $1 \leq p < \infty$,
  see for instance the book of Triebel~\cite[p.~36]{TriI}.
  For such spaces the inequality~\eqref{eq:integrability} does not hold anymore.
  However, we can still
  construct fooling functions composed of bumps on disjoint subcubes
  with random sign, but we need to introduce an additional constant
  in order to take the non-local nature of fractional derivatives into account.
  
  Let us consider an easier example: 
  Namely, classes of H\"older continuous functions
  with fractional smoothness $0 < \beta \leq 1$
 (and integrability parameter $p=\infty$) given by
  \begin{equation} \label{eq:Cbeta}
    C^{\beta}([0,1]^d)
      := \left\{f:[0,1]^d \to \R \,\middle|\,
                |f|_{C^{\beta}}
                  := \sup_{\vecx,\vecz \in [0,1]^d} 
                       \frac{|f(\vecx)-f(\vecz)|}{|\vecx-\vecz|_{\infty}^{\beta}}
                  < \infty
         \right\} \,.
  \end{equation}
  With the choice
  $f_{\veci} := \frac{1}{2}\, m^{-\beta} \, \psi_{\veci}$,
  within the proof above, one can ensure~\eqref{eq:signedbumps}.
  Thus, loosing just a factor $1/2$, we still have the same order
  as should be expected from generalizing the integer smoothness case,
  \begin{equation} \label{eq:HoelderLB}
    e^{\MC}_{\prob}(n,\delta,C^{\beta}([0,1]^d))
      \succeq n^{-{\beta}/d} \, \min\left\{ 1,\,
                          \sqrt{\frac{\log\delta^{-1}}{n}}
                  \right\}\,.
  \end{equation}
  This fits very well to the upper bounds of Theorem~\ref{thm:HoelderStrat}.
\end{rem}

\section{Upper Bounds}
\label{sec:UBs}

\subsection{Probability amplification}

One of the most elementary methods of `probability amplification'
is the so-called `median trick', 
see Alon et al.~\cite{AMS96} and Jerrum et al.~\cite{JVV86}.
The following proposition is a minor modification of
\cite[Proposition~2.1, in particular (2.6)]{NiPo09} from Niemiro and Pokarowski,
now adapted to the language of algorithms and IBC.

As in Section~\ref{sec: aux_lem},
we consider a general function space~$\Wspace$
equipped with a (semi-)norm $\|\cdot\|_{\Wspace}$
and take its unit ball $\Ball_\Wspace$ as input set.

\begin{prop}[Median trick] \label{prop:med}
  For $\eps > 0$ 
%   and $\Finput$ a set of inputs for the integration problem.
  let $A_m$ be an arbitrary Monte Carlo algorithm such that
  \begin{equation*}
    \sup_{\|f\|_{\Wspace} \leq 1} \P\{|A_m(f) - \Int f| > \eps\} \leq \alpha \,,
  \end{equation*}
  where~$0 < \alpha < 1/2$.
  For an odd natural number~$k$,
  define
  \begin{equation*}
    A_{k,m}(f) := \median\left\{A_m^{(1)}(f),\ldots,A_m^{(k)}(f)\right\}
  \end{equation*}
  as the median of $k$ independent realisations of~$A_m$.
  Then
  \begin{equation*}
    \sup_{\|f\|_{\Wspace} \leq 1} \P\{|A_{m,k}(f) - \Int f| > \eps\}
      \,\leq\, \frac{1}{2} (4\alpha(1-\alpha))^{k/2} %\,.
      \,<\, 2^{k-1} \, \alpha^{k/2} \,.
  \end{equation*}
\end{prop}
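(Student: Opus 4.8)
The plan is to fix an input $f$ with $\|f\|_{\Wspace} \leq 1$ and reduce the event that the median is far from $\Int f$ to a statement about the number of ``bad'' realisations. Write $A_{m,k}(f) = \median\{Y_1,\dots,Y_k\}$ with $Y_j := A_m^{(j)}(f)$, and let $B_j$ denote the event $\{|Y_j - \Int f| > \eps\}$. By assumption $\P(B_j) \leq \alpha$ for every $j$, and since the $k$ realisations are independent, so are the events $B_1,\dots,B_k$. The deterministic observation driving the proof is: if strictly more than $k/2$ of the values $Y_j$ lie in $[\Int f - \eps,\, \Int f + \eps]$, then the $(k+1)/2$-th order statistic, i.e.\ the median, lies there too; contrapositively, $|A_{m,k}(f) - \Int f| > \eps$ forces at least $\lceil k/2 \rceil = (k+1)/2$ of the events $B_j$ to occur. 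Hence
\begin{equation*}
  \P\{|A_{m,k}(f) - \Int f| > \eps\}
    \leq \P\Bigl\{ \textstyle\sum_{j=1}^k \ind_{B_j} \geq \tfrac{k+1}{2} \Bigr\}.
\end{equation*}

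Next I would dominate the right-hand side by a binomial tail. Since $\P\{\sum_{j} \ind_{B_j} \geq t\}$ is monotone non-decreasing in each individual success probability (a standard coupling: enlarge each $B_j$ to an event of probability exactly $\alpha$ containing it), we get
\begin{equation*}
  \P\Bigl\{ \textstyle\sum_{j=1}^k \ind_{B_j} \geq \tfrac{k+1}{2} \Bigr\}
    \leq \sum_{j=(k+1)/2}^{k} \binom{k}{j} \alpha^j (1-\alpha)^{k-j}.
\end{equation*}
Because $\alpha < 1/2$ we have $\alpha/(1-\alpha) < 1$, so $j \mapsto \alpha^j(1-\alpha)^{k-j}$ is decreasing; for every $j \geq (k+1)/2 > k/2$ it is therefore at most $(\alpha(1-\alpha))^{k/2}$. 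Pulling this factor out and using that, for odd $k$, symmetry of the binomial coefficients gives $\sum_{j=(k+1)/2}^{k} \binom{k}{j} = \tfrac12 \sum_{j=0}^k \binom{k}{j} = 2^{k-1}$, we obtain
\begin{equation*}
  \P\{|A_{m,k}(f) - \Int f| > \eps\}
    \leq 2^{k-1}\,(\alpha(1-\alpha))^{k/2}
    = \tfrac12 \,(4\alpha(1-\alpha))^{k/2}.
\end{equation*}
Finally $1-\alpha < 1$ yields $\tfrac12(4\alpha(1-\alpha))^{k/2} < \tfrac12(4\alpha)^{k/2} = 2^{k-1}\alpha^{k/2}$, the second (cruder) estimate. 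Taking the supremum over $f$ finishes the argument.

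There is no genuine obstacle: the proof combines the elementary ``a strict majority of good answers forces a good median'' principle with a textbook binomial-tail computation. The only two points needing a little care are (i) making precise that $\P(B_j) \leq \alpha$, rather than $=\alpha$, still permits comparison with a true $\mathrm{Bin}(k,\alpha)$ tail, and (ii) the exact split $\sum_{j \geq (k+1)/2} \binom{k}{j} = 2^{k-1}$, which relies on $k$ being odd so that no central term is either double-counted or omitted.
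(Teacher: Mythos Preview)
Your proof is correct and follows the standard argument for the median trick. Note that the paper itself does not prove this proposition; it simply states the result and cites Niemiro and Pokarowski~\cite{NiPo09} for the original version, so there is no in-paper proof to compare against. Your reduction to a binomial tail, the monotonicity bound $\alpha^j(1-\alpha)^{k-j} \leq (\alpha(1-\alpha))^{k/2}$ for $j \geq (k+1)/2$, and the exact count $\sum_{j\geq (k+1)/2}\binom{k}{j}=2^{k-1}$ for odd~$k$ together give precisely the claimed bound $\tfrac12(4\alpha(1-\alpha))^{k/2}$, and the cruder estimate follows immediately from $1-\alpha<1$.
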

The previous proposition can be used to derive upper bounds
for the probabilistic $(\eps,\delta)$-complexity 
$n^{\MC}_{\prob}(\eps,\delta,\Wspace)$
in terms of the $\ell$-mean error complexity
$n^{\MC}_{\ellmean}(\eps,\Wspace)$,
compare \eqref{eq:ME}.

\begin{thm} \label{thm:med}
  Let $\ell\geq 1$ and $0 < \delta \leq 1/2$.
  Then for the $(\eps,\delta)$-complexity holds
  \begin{equation*}
    n^{\MC}_{\prob}(\eps,\delta,\Wspace)
      \leq 2 \log_2 \delta^{-1}
            \cdot n^{\MC}_{\ellmean}\left(8^{-1/\ell} \, \eps ,\, \Wspace \right) \,.
  \end{equation*}
  In particular, if $e^{\MC}_{\ellmean}(n,\Wspace) \preceq n^{-\rho}$
  for some $\rho > 0$,
  then we have
  \begin{equation*}
    e^{\MC}_{\prob}(n,\delta,\Wspace)
      \preceq \left(\frac{\log \delta^{-1}}{n}\right)^{\rho} \,
    \qquad\text{for $n \succeq \log\delta^{-1}$}.
  \end{equation*}
\end{thm}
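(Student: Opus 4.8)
The plan is to prove the main inequality first, and then deduce the asymptotic consequence by a direct substitution. Fix $\ell\ge 1$ and $\delta\in(0,1/2]$, and set $m := n^{\MC}_{\ellmean}(8^{-1/\ell}\eps,\Wspace)$. By definition of the $\ell$-mean complexity there is a Monte Carlo algorithm $A_m$ using $m$ function values with
\begin{equation*}
  \sup_{\|f\|_{\Wspace}\le 1}\left(\expect|A_m(f)-\Int f|^{\ell}\right)^{1/\ell}
    \le 8^{-1/\ell}\,\eps \,,
\end{equation*}
so that $\expect|A_m(f)-\Int f|^{\ell}\le \eps^{\ell}/8$ for all admissible $f$. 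Markov's inequality applied to the nonnegative random variable $|A_m(f)-\Int f|^{\ell}$ then gives
\begin{equation*}
  \P\{|A_m(f)-\Int f|>\eps\}
    = \P\{|A_m(f)-\Int f|^{\ell}>\eps^{\ell}\}
    \le \frac{\expect|A_m(f)-\Int f|^{\ell}}{\eps^{\ell}}
    \le \frac18 =: \alpha
\end{equation*}
uniformly in $f$. This is exactly the hypothesis of Proposition~\ref{prop:med} with $\alpha=1/8<1/2$.

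Next I would pick an odd integer $k$ and form the median algorithm $A_{k,m}$ of $k$ independent runs of $A_m$; it uses $km$ function values. Proposition~\ref{prop:med} yields
\begin{equation*}
  \sup_{\|f\|_{\Wspace}\le 1}\P\{|A_{k,m}(f)-\Int f|>\eps\}
    < 2^{k-1}\,\alpha^{k/2}
    = 2^{k-1}\,8^{-k/2}
    = \tfrac12\,(2\cdot 8^{-1/2})^{k}
    = \tfrac12\, 2^{-k/2}.
\end{equation*}
To make the right-hand side at most $\delta$ it suffices that $2^{-k/2}\le 2\delta$, i.e. $k\ge 2\log_2(2\delta)^{-1}=2\log_2\delta^{-1}-2$. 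Choosing $k$ to be the smallest odd integer with this property, one checks $k\le 2\log_2\delta^{-1}$ (here $\delta\le 1/2$ makes $\log_2\delta^{-1}\ge 1$, so rounding up to an odd integer stays within the bound; the details are routine). Then $A_{k,m}$ is $(\eps,\delta)$-approximating and uses $km\le 2\log_2\delta^{-1}\cdot n^{\MC}_{\ellmean}(8^{-1/\ell}\eps,\Wspace)$ function values, which is the claimed estimate.

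For the ``in particular'' part, suppose $e^{\MC}_{\ellmean}(n,\Wspace)\preceq n^{-\rho}$. Inverting this rate gives $n^{\MC}_{\ellmean}(\eta,\Wspace)\preceq \eta^{-1/\rho}$ for small $\eta>0$, hence $n^{\MC}_{\ellmean}(8^{-1/\ell}\eps,\Wspace)\preceq \eps^{-1/\rho}$ (the constant $8^{-1/\ell}$ is absorbed). Plugging into the main inequality, $n^{\MC}_{\prob}(\eps,\delta,\Wspace)\preceq \log_2\delta^{-1}\cdot\eps^{-1/\rho}$; rephrasing this complexity bound as an error bound (solve $n\asymp \log\delta^{-1}\cdot\eps^{-1/\rho}$ for $\eps$) gives $e^{\MC}_{\prob}(n,\delta,\Wspace)\preceq (\log\delta^{-1}/n)^{\rho}$ in the regime $n\succeq\log\delta^{-1}$, as stated. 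I do not expect a serious obstacle here: the only mildly delicate point is the bookkeeping to guarantee that the smallest admissible odd $k$ really satisfies $k\le 2\log_2\delta^{-1}$ (rather than, say, $2\log_2\delta^{-1}+1$), which is where the restriction $\delta\le 1/2$ is used; everything else is Markov plus Proposition~\ref{prop:med} plus rate inversion.
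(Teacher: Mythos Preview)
Your proposal is correct and follows essentially the same route as the paper: Markov's inequality to get failure probability $\alpha=1/8$, then Proposition~\ref{prop:med} with the smallest odd $k\ge 2\log_2(2\delta)^{-1}$, noting $k\le 2\log_2\delta^{-1}$ for $\delta\le 1/2$. The only cosmetic difference is in the asymptotic part: the paper allocates the budget directly by setting $m:=\lfloor n/(2\log_2\delta^{-1})\rfloor$ and applying the rate assumption to $m$, whereas you invert error-to-complexity and back; both are standard and equivalent.
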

\begin{proof}
  Without loss of generality, we assume that
  $n^{\MC}_{\ellmean}(8^{-1/\ell} \eps,\Wspace) < \infty$,
  otherwise the claimed inequality is trivial.
  Hence, there is an $m\in\N$ such that
  $e^{\MC}_{\ellmean}(m,\Wspace)<8^{-1/\ell} \eps$.
  This implies that there is a Monte Carlo
  algorithm $A_m$ such that
  \begin{equation*}
    e^{\MC}_{\ellmean}(A_m,\Wspace)
      :=\sup_{{\|f\|_{\Wspace} \leq 1}}
          \left( \expect \| A_m(f)-\Int f\|^{\ell} \right)^{1/\ell}
      \leq 8^{-1/\ell} \eps.
  \end{equation*}
  Thus, for any $f\in\Ball_\Wspace$
  % For any algorithm~$A_m$, 
  by Markov's inequality we have
  \begin{equation*}
    \P\{|A_m(f) - \Int f| > \eps\}
      \leq \left(\frac{e^{\MC}_{\ellmean}(A_m,\Wspace)}{\eps}\right)^\ell
      \leq \frac{1}{8}.
  \end{equation*}
  Now we aim to apply Proposition~\ref{prop:med} with $k$ chosen as the smallest odd natural number that satisfies
  $k \geq 2 \log_2 (2\delta)^{-1}$.
  (Note that $k \leq 2 \log_2 \delta^{-1}$ for $0 < \delta \leq 1/2$.)
  Then we obtain the desired complexity bound
  \begin{equation*}
    n^{\MC}_{\prob}(\eps,\delta,\Wspace)
      \leq k \cdot n^{\MC}_{\ellmean}\bigl(8^{-1/\ell} \, \eps, \Wspace
                   \bigr)
      \leq 2 \log_2 \delta^{-1} \cdot
        \, n^{\MC}_{\ellmean}\bigl(8^{-1/\ell} \, \eps ,\, \Wspace \bigr)\,.
  \end{equation*}
  In terms of the error quantities,
  for fixed~$m$ and odd $k \geq 2 \log_2 (2\delta)^{-1}$ we can state
 \begin{equation*}
    e^{\MC}_{\prob}(km,\delta, \Wspace)
      \leq 8^{-1/\ell} \, e^{\MC}_{\ellmean}(m,\Wspace)\,.
  \end{equation*}
  Assuming $e^{\MC}_{\ellmean}(m,\Wspace) \leq C \, m^{-\rho}$
  for $m \geq m_0 \in \N$,
  and given an information budget $n \geq 2 m_0 \log_2{\delta^{-1}}$,
  put $m := \lfloor n/(2 \log_2 \delta^{-1}) \rfloor \asymp n/\log\delta^{-1}$.
  Then the assertion follows by the assumption
  with hidden constant
  $\bigl(\frac{2}{\log 2}(1+1/m_0)\bigr)^{\rho} \cdot C$.
\end{proof}

\begin{rem}[Integrating $L_p$-functions]
  The lower bounds of Theorem~\ref{thm:LBs}
  match the upper bounds from Theorem~\ref{thm:med},
  iff the rate of convergence is related to the integrability index by
  $\rho = 1-1/q$ where $q := \min\{p,2\}$.
  This is only the case for smoothness $r = 0$, 
  i.e.,\ when $L_p$-balls are the considered input sets.
 In that case,
  \begin{equation} \label{eq:INT-Lp}
    e^{\MC}_{\prob}(n,\delta,L_p)
      \asymp \left(\frac{\log \delta^{-1}}{n}\right)^{1-1/q} \,,
    \qquad\text{for $n \succeq \log \delta^{-1}$.}
  \end{equation}
  Here we used estimates of the $q$-mean error for the standard i.i.d.-based
  Monte Carlo method applied to $L_p$-functions which, for example, can be found in 
  \cite[Theorem~2]{vBEs65}, \cite[Proposition~5.4]{He94},
  \cite[Sect.~2.2.8, Proposition~3]{No88},
  as well as \cite[Proof of Theorem~1]{RuSc15}.
\end{rem}

\begin{rem}[Alternatives to the median trick]
  Catoni~\cite{Ca12} proposes an alternative scheme based on random samples
  of $L_p$-functions, which suppresses outliers
  and (in contrast to the median trick) is symmetric
  in the sense that permuting the sample data does not change the result.
  Compare also Huber~\cite{Hu17} where this approach is combined with the median trick.
  Unfortunately, their methods
  are not homogeneous and shift invariant, that is, for the algorithm, say $A$,
  in general
  $A(af + b) = a A(f) + b$ does not hold.
\end{rem}

\subsection{Separation of the main part}

\emph{Separation of the main part}, also known as \emph{control variates},
is a well established technique of variance reduction which uses
the approximation of functions %from~$\Finput$
with respect to an~$L_q$-norm
in order to exploit the smoothness of the given input set.

Within this section we assume that $G\subset \R^d$ is a bounded Lipschitz domain,
see~\cite{NT06} for details.
For $q\geq 1$ let $L_q(G)$ be
the Lebesgue space equipped with the norm $\| \cdot \|_{L_q(G)}$. 
Let $\Wspace\subset L_q(G)$ be a normed linear space
with corresponding unit ball $\Ball_\Wspace$
and assume that function evaluations are continuous (well-defined) on $\Wspace$.
For the approximation step we only consider linear methods 
\begin{equation} \label{eq:linAppMethod}
  A_n \colon \Wspace \to L_q(G), \qquad
  f \mapsto g := \sum_{i=1}^n f(\vecx_i) \, g_i \,,
\end{equation}
with nodes~$\vecx_i \in G$ and functions~$g_i \in L_q(G)$, where $\int_G g_i(\vecx) \dint \vecx$ is known
for any $i=1,\dots,n$.
The minimal $L_q$-approximation error of such methods 
is denoted by 
% shall be denoted by
\begin{equation} \label{eq:a(n)}
  e^{\deter}(n,\Wspace \embed L_q)
    := \inf_{A_n} \sup_{\|f\|_{\Wspace} \leq 1} \|A_n(f) - f\|_{L_q(G)} \,,
\end{equation}
and the $\eps$-complexity $n^{\deter}(\eps,\Wspace \embed L_q)$ is
the minimal number of function evaluations needed in order to achieve 
an $L_q$-approximation error smaller than $\eps$.
The idea is to apply a Monte Carlo integration method $M_n \colon L_q \to \R$
to the difference $f-g$ between approximating and original function,
while the integral of~$g = A_n(f)$ is considered to be known.
This approach leads to the following theorem.

\begin{thm}[Separation of the main part] \label{thm:separation}
For any $n\in\N$ we have
\begin{equation*}
    e^{\MC}_{\prob}(2n,\delta,\Wspace)
      \leq e^{\deter}(n,\Wspace \embed L_q) 
            \cdot e^{\MC}_{\prob}(n,\delta,L_q) \,. 
  \end{equation*}
\end{thm}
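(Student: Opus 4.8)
The plan is to combine a near-optimal linear $L_q$-approximation with a near-optimal probabilistic Monte Carlo integration method for $L_q$, in the classical \emph{control variates} fashion. Fix $\eta>0$. Choose a linear method $A_n$ of the form~\eqref{eq:linAppMethod} whose worst case $L_q$-approximation error $a:=\sup_{\|f\|_{\Wspace}\le 1}\|A_n f-f\|_{L_q(G)}$ satisfies $a< e^{\deter}(n,\Wspace\embed L_q)+\eta$, and choose a Monte Carlo algorithm $M_n$ using $n$ function values that is $(\eps',\delta)$-approximating in $L_q(G)$ with $\eps'< e^{\MC}_{\prob}(n,\delta,L_q)+\eta$. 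Since the guarantee of $M_n$ only concerns the unit ball of $L_q$, I first rescale: put $\tilde M_n(g):=a\,M_n(g/a)$, which is again a Monte Carlo method using $n$ function values (evaluating $g/a$ at a point costs one evaluation of $g$) and which, for every $g\in L_q(G)$ with $\|g\|_{L_q(G)}\le a$, satisfies $\P\{|\tilde M_n(g)-\Int g|>a\eps'\}\le\delta$ — indeed $\|g/a\|_{L_q(G)}\le 1$, and multiplying the relevant event's inequality by $a>0$ together with $a\,\Int(g/a)=\Int g$ gives the claim (note this uses only linearity of the exact integral, not any homogeneity of $M_n$). If $e^{\deter}(n,\Wspace\embed L_q)=0$ one simply takes some $a>0$ anyway. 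Now define the combined algorithm
\begin{equation*}
  B_{2n}(f) := \Int(A_n f) + \tilde M_n(f-A_n f).
\end{equation*}

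Next I would verify the cardinality. The method $A_n$ queries the fixed nodes $\vecx_1,\dots,\vecx_n$, and $\Int(A_n f)=\sum_{i=1}^n f(\vecx_i)\int_G g_i(\vecx)\dint\vecx$ is computable because the integrals $\int_G g_i$ are known. To run $\tilde M_n$ on $h:=f-A_n f$, each of its (possibly random, possibly adaptive) evaluation points $\vecz$ requires only the value $h(\vecz)=f(\vecz)-\sum_{i=1}^n f(\vecx_i)\,g_i(\vecz)$, which costs exactly one new function value of $f$, since the $f(\vecx_i)$ are already available and the $g_i$ are explicitly known functions. Hence $B_{2n}$ uses at most $n+n=2n$ evaluations of $f$ and is a legitimate randomized algorithm of cardinality $2n$; measurability of its error functional is inherited from that of $M_n$.

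For the error bound, $\Int(A_n f)$ cancels and
\begin{equation*}
  |B_{2n}(f)-\Int f| = |\tilde M_n(h)-\Int h|, \qquad h = f-A_n f.
\end{equation*}
For $\|f\|_{\Wspace}\le 1$ we have $\|h\|_{L_q(G)}=\|A_n f-f\|_{L_q(G)}\le a$, so the rescaled guarantee yields $\P\{|B_{2n}(f)-\Int f|>a\eps'\}\le\delta$. Thus $B_{2n}$ is $(a\eps',\delta)$-approximating in $\Wspace$, whence
\begin{equation*}
  e^{\MC}_{\prob}(2n,\delta,\Wspace) \le a\eps' < \bigl(e^{\deter}(n,\Wspace\embed L_q)+\eta\bigr)\bigl(e^{\MC}_{\prob}(n,\delta,L_q)+\eta\bigr),
\end{equation*}
and letting $\eta\to 0$ gives the assertion.

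I expect the only genuinely delicate point to be the bookkeeping of information: one must be sure that forming $f-A_n f$ at a fresh point really costs a single evaluation of $f$ — this is precisely why $A_n$ is required to be linear with \emph{known} nodes and with \emph{known} integrals of the $g_i$ — and that transporting the $(\eps',\delta)$-guarantee of $M_n$ from the unit ball of $L_q$ to the ball of radius $a$ is harmless, which the explicit rescaling above handles without assuming homogeneity of $M_n$. Everything else is routine.
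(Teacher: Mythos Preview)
Your proof is correct and follows essentially the same route as the paper: combine a near-optimal linear $L_q$-approximation $A_n$ with a near-optimal $(\eps',\delta)$-approximating Monte Carlo method $M_n$ on $L_q$, rescale $M_n$ to handle inputs in the $L_q$-ball of radius $a$, and observe that evaluating $f-A_nf$ at a fresh node costs one additional evaluation of $f$. The paper's proof is organized identically (writing the combined method as $\Int g + \alpha M_n^{\omega}(\alpha^{-1}(f-g))$ with $g=A_nf$ and $\alpha$ playing the role of your $a$), including the remark that homogeneity of $M_n$ is not required.
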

\begin{proof}
  Let $A_n$ be a linear approximation method, see~\eqref{eq:linAppMethod},
  which guarantees for any $f\in \Ball_\Wspace$ and some $\alpha > 0$ that
  \begin{equation*}
   \|A_n(f) - f\|_{L_q(G)} \leq \alpha.
  \end{equation*}
  Further, let $M_n^{\omega}$ be a Monte Carlo method which 
  approximates $\Int h$ for inputs $h \in L_q$. 
  With this we define a new Monte Carlo method $Q_{2n}^\omega$
  (a randomized quadrature rule) as follows:
\begin{enumerate}
    \item Compute the approximation $g := A_n(f) \in L_q(G)$,
      using function values~$f(\vecx_i)$
      at nodes $\vecx_1,\ldots,\vecx_n \in G$.
    \item Return
      \begin{equation*}
        Q_{2n}^{\omega}(f)
        := \Int g + \alpha M_n^{\omega}\left(\alpha^{-1}(f - g)\right)
        \end{equation*}
      where $M_n$ evaluates %the function
      $\alpha^{-1}(f - g)$ at random nodes
      \mbox{$\vecX_{n+1}^{\omega},\ldots,\vecX_{2n}^{\omega} \in G$.}
      (If $M_n$ is homogeneous, that is,
      \mbox{$M_n^{\omega}(\lambda f) = \lambda M_n^{\omega}(f)$}
      for any $\lambda \in \R$,
      then $\alpha$ cancels out.)
  \end{enumerate}
  Note that, the information
  \mbox{$\vecy = (f(\vecx_1),\ldots,f(\vecx_n),
    f(\vecX_{n+1}^{\omega}),\ldots,f(\vecX_{2n}^{\omega}))$}
  suffices to execute the algorithm, namely,
  \begin{align*}
    \Int g
      &= \sum_{i=1}^n f(\vecx_i) \Int g_i \,,
    \qquad \text{and} \\
    [\alpha^{-1}(f - g)](\vecX_{n+j}^{\omega})
      &= \alpha^{-1} \left(f(\vecX_{n+j}^{\omega})
                        - \sum_{i=1}^n f(\vecx_i) \, g_i(\vecX_{n+j}^{\omega})
                  \right)
    \qquad \text{for $j=1,\ldots,n$.}
  \end{align*}
  Indeed, $\Int g_i$ is assumed to be precomputed and
  computing function values of $g_i$
  is considered to belong to the combinatorial cost,
  rather than the information cost of the algorithm.
  
  By writing $\eps = \alpha \eps'$,
  the uncertainty of the algorithm can be traced back to the uncertainty of~$M_n$.
  Namely, if $M_n$ is $(\eps',\delta)$-approximating in $L_q(G)$, then
  \begin{align*}
    \P\left\{|Q_{2n}^{}(f) - \Int f| > \eps\right\}
      &= \P\left\{|M_{n}^{}(\alpha^{-1}(f-g)) - \Int (\alpha^{-1}(f-g))|
                     > \eps'
           \right\} \\
      &\leq \delta \,.
  \end{align*}
  Optimal methods $A_n$ and $M_n^{\omega}$ lead to
  $\alpha \to e^{\det}(n,\Wspace \embed L_q)$
  and $\eps' \to e^{\MC}_{\prob}(n,\delta,L_q)$,
  while keeping the uncertainty bounded by~$\delta$,
  thus letting $\eps$ approach the stated error bound.
\end{proof}

As long as function evaluations are continuous,
it suffices to work with deterministic approximation methods of the form $\eqref{eq:linAppMethod}$. 
Note that for isotropic Sobolev spaces $W_p^r(G)$
on bounded Lipschitz domains~$G \subset \R^d$,
this is the case iff $r p > d$.
In this setting 
it is well known that with $q := \min\{p,2\}$,
\begin{equation} \label{eq:WpLq}
  e^{\deter}\left(n,W_p^r(G) \embed L_q(G)\right)
    \asymp n^{-r/d} \,,
  \qquad\text{if $rp > d$.}
\end{equation}
For $G = [0,1]^d$,
this result can be achieved with piecewise polynomial interpolation,
see for instance Heinrich~\cite[Proposition~5.1]{He94},
technical details of approximation methods are contained in Ciarlet~\cite{Cia78}.
For the general case of bounded Lipschitz domains~\mbox{$G \subset \R^d$},
see Novak and Triebel~\cite[Theorem~23]{NT06}.
%see the survey in Heinrich~\cite{He08SobI},
%and also \cite[Chapter~3]{Cia78} for technical details.
From this we conclude optimal upper bounds.

\begin{thm} \label{thm:UBsWprSep}
  Let $G \subset \R^d$ be a bounded Lipschitz domain.
  Further let $r \in \N$ and $1 \leq p \leq \infty$ with $rp > d$.
  Then we have the asymptotic rate
  \begin{equation*}
    e^{\MC}_{\prob}\left(n,\delta,W_p^{r}(G)\right)
      \asymp n^{-r/d} \, \min\left\{1, \,
                                    \left(\frac{\log \delta^{-1}}{n}\right)^{1-1/q}
                             \right\} \,,
  \end{equation*}
  where~$q := \min\{2,p\}$.
\end{thm}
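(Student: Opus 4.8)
The plan is to read off the lower bound from Theorem~\ref{thm:LBs} and to assemble the matching upper bound from two ingredients: a deterministic quadrature that is valid at every confidence level, and the separation-of-the-main-part estimate of Theorem~\ref{thm:separation} fed with the known Sobolev approximation rate and the $L_q$-integration rate, the latter being valid only in the range $n \succeq \log\delta^{-1}$. Gluing these two upper bounds produces exactly the claimed minimum.

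For the lower bound there is nothing to do. Theorem~\ref{thm:LBs} with $q=\min\{p,2\}$ already gives
\[
  e^{\MC}_{\prob}(n,\delta,W_p^r(G))
    \succeq \min\left\{n^{-r/d},\; n^{-(r/d+1-1/q)}(\log\delta^{-1})^{1-1/q}\right\}
    = n^{-r/d}\min\left\{1,\;\left(\frac{\log\delta^{-1}}{n}\right)^{1-1/q}\right\},
\]
and the extra hypothesis $rp>d$ will only be used for the upper bound (to guarantee that function evaluations are continuous on $W_p^r(G)$ and that~\eqref{eq:WpLq} holds).

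For the upper bound I would first record the deterministic estimate. Since $rp>d$, by~\eqref{eq:WpLq} there is a linear method $A_n$ of the form~\eqref{eq:linAppMethod} with $\sup_{\|f\|_{\Wspace}\leq1}\|A_nf-f\|_{L_q(G)}\preceq n^{-r/d}$; the induced quadrature $Q_nf:=\Int A_nf=\sum_{i=1}^n f(\vecx_i)\Int g_i$ then satisfies
\[
  |\Int f - Q_n f| = |\Int(f-A_nf)| \leq \|f-A_nf\|_{L_1(G)} \leq \Vol(G)^{1-1/q}\,\|f-A_nf\|_{L_q(G)} \preceq n^{-r/d}
\]
uniformly over the unit ball. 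As a deterministic method $Q_n$ is $(\eps,\delta)$-approximating for \emph{every} $\delta\in(0,1)$ once $\eps\geq c\,n^{-r/d}$, so $e^{\MC}_{\prob}(n,\delta,W_p^r(G))\preceq n^{-r/d}$; this handles the high-confidence regime $\log\delta^{-1}\succeq n$, where the right-hand side of the claim is $\asymp n^{-r/d}$. Second, I would apply Theorem~\ref{thm:separation} with target space $L_q(G)$ and combine it with~\eqref{eq:WpLq} and the $L_q$-integration rate~\eqref{eq:INT-Lp}: for $n\succeq\log\delta^{-1}$,
\[
  e^{\MC}_{\prob}(2n,\delta,W_p^r(G))
    \leq e^{\deter}\!\left(n,W_p^r(G)\embed L_q(G)\right)\cdot e^{\MC}_{\prob}(n,\delta,L_q)
    \preceq n^{-r/d}\left(\frac{\log\delta^{-1}}{n}\right)^{1-1/q},
\]
and renaming the cardinality $2n$ (replacing an odd cardinality by the next smaller even one, which changes only constants) gives $e^{\MC}_{\prob}(n,\delta,W_p^r(G))\preceq n^{-r/d}(\log\delta^{-1}/n)^{1-1/q}$ for $n\succeq\log\delta^{-1}$.

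Finally I would glue the two bounds. For $n\geq n_0\log\delta^{-1}$ the separation estimate applies and, since then $(\log\delta^{-1}/n)^{1-1/q}\leq1$, it already equals $n^{-r/d}\min\{1,(\log\delta^{-1}/n)^{1-1/q}\}$ up to constants. For $n< n_0\log\delta^{-1}$ the deterministic estimate gives $n^{-r/d}$, while $\min\{1,(\log\delta^{-1}/n)^{1-1/q}\}$ lies between the positive constant $\min\{1,n_0^{-(1-1/q)}\}$ and $1$ (it equals $1$ as soon as $n\leq\log\delta^{-1}$), so the two expressions again agree up to constants. This yields $e^{\MC}_{\prob}(n,\delta,W_p^r(G))\preceq n^{-r/d}\min\{1,(\log\delta^{-1}/n)^{1-1/q}\}$ for all large $n$ and small $\delta$, matching the lower bound; the degenerate case $p=1$ (so $q=1$, exponent $0$) is covered by the deterministic bound alone, the claimed rate then being simply $n^{-r/d}$. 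I do not anticipate a genuine obstacle: the theorem is essentially an assembly of Theorems~\ref{thm:LBs} and~\ref{thm:separation} together with the cited facts~\eqref{eq:WpLq} and~\eqref{eq:INT-Lp}, and the only point requiring care is the bookkeeping when stitching the two upper-bound regimes across the transition band $\log\delta^{-1}\leq n\leq n_0\log\delta^{-1}$, ensuring that the restriction $n\succeq\log\delta^{-1}$ in~\eqref{eq:INT-Lp} leaves no gap.
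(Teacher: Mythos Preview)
Your proposal is correct and follows essentially the same route as the paper: invoke Theorem~\ref{thm:LBs} for the lower bound, combine Theorem~\ref{thm:separation} with~\eqref{eq:WpLq} and~\eqref{eq:INT-Lp} for the upper bound when $n\succeq\log\delta^{-1}$, and fall back on deterministic quadrature (derived from~\eqref{eq:WpLq}) in the high-confidence regime. Your extra care with the transition band and the degenerate case $p=1$ is welcome but not strictly needed for the asymptotic statement.
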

\begin{proof}
  The lower bounds follow from Theorem~\ref{thm:LBs}.
  
  For $n \succeq \log \delta^{-1}$,
  we combine~\eqref{eq:INT-Lp} with~\eqref{eq:WpLq}
  via Theorem~\ref{thm:separation} and obtain
  \begin{equation*}
    e^{\MC}_{\prob}\left(n,\delta,W_p^{r}(G)\right)
      \preceq n^{-r/d} \, \left(\frac{\log \delta^{-1}}{n}\right)^{1-1/q} \,.
  \end{equation*}
  For $n \prec \log \delta^{-1}$ we rely on deterministic quadrature.
  This problem is easier than approximation in the sense that
  if $g \in L_q(G)$ is an approximation of~$f$,
  then \mbox{$|\Int f - \Int g| \leq \Vol_d(G)^{1-1/q} \cdot \|f-g\|_{L_q(G)}$}.
  Hence,
  \begin{align*}
    e^{\deter}(n,W_p^{r}(G))
      &:= \inf_{A_n} \sup_{\|f\|_{\Wspace} \leq 1} |A_n(f) - \Int f| \\
      &\preceq e^{\deter}(n,W_p^{r}(G) \embed L_q(G))
       \asymp n^{-r/d} \,.
  \end{align*}
  See also Novak~\cite[1.3.12]{No88} for a direct derivation on $G = [0,1]^d$.
\end{proof}

\begin{rem}[Lower smoothness]
  The condition $rp>d$ is necessary to guarantee that the evaluation
  of functions on $W_p^r(G)$ for $1<p\leq \infty$ is well-defined.
  (For $p = 1$, the condition $r = d$ is also sufficient, but then deterministic
  methods already provide the optimal error rates.)
  In the cases $1 < p < \infty$ with $rp \leq d$ one can still use
  separation of the main part, but with a randomized approximation scheme,
  see Heinrich~\cite{He08SobI} for the case~$G = [0,1]^d$.
  That way, for any $1 \leq p \leq \infty$
  and general~$r \in \N$ we have
  \begin{equation*}
    e^{\MC}_{\oneMean}(n,W_p^r([0,1]^d)) \asymp n^{-(r/d + 1 - 1/q)}\,,
  \end{equation*}
  with~$q := \min\{p,2\}$.
  Probability amplification, see Theorem~\ref{thm:med},
  yields
  \begin{equation*}
    e^{\MC}_{\prob}(n,\delta,W_p^r([0,1]^d))
      \asymp \left(\frac{\log \delta^{-1}}{n}\right)^{r/d + 1 - 1/q}\,,
    \qquad\text{for $n \succeq \log \delta^{-1}$.}
  \end{equation*}
  The power of $\log \delta^{-1}$ in this upper bound
  may exceed the power of the lower bound by $r/d$ which can get close to~$1$
  for 
  $p \to 1$.
  We conjecture that the 
  influence of $\delta$ 
  is smaller
  at least if one is close to the regime where functions are continuous.
  
  Instead of deterministic algorithms of the form \eqref{eq:linAppMethod}
  one might also consider general randomized methods for the approximation of functions in $\Wspace$.
  For those let
  $
   e^{\MC}_{\prob}(n,\delta,\Wspace\embed L_q)
  $
  be the smallest $\eps>0$, such that there exists a general randomized approximation algorithm
  satisfying
  \begin{equation*}
    \P\{\| A_n(f)-f\|_{L_q} > \eps \} \leq \delta
    \qquad \text{for all $\|f\|_{\Wspace} \leq 1$.}
  \end{equation*}
  Similarly to Theorem~\ref{thm:separation} one can show that
  \begin{equation}
    e^{\MC}_{\prob}(2n,\delta,\Wspace)
      \leq e^{\MC}_{\prob}(n,\delta/2,\Wspace \embed L_q)
            \cdot e^{\MC}_{\prob}(n,\delta/2,L_q) \,.
  \end{equation}
  Such an approach, however, seems to rely on complicated algorithms,
  since non-linearity might be inevitable
  in order to suppress outliers.
  There might be easier implementable Monte Carlo methods for integration
  which achieve a better order of convergence without relying on the approximation of functions.
  Such methods are needed in spaces of mixed smoothness,
  see the discussion in Section~\ref{sec:Wmix}.
  
  Anyway, studying Sobolev embeddings $W_p^r(G) \embed L_q(G)$
  in terms of approximation with high confidence
  within the regime \mbox{$d(q - p) < rqp \leq dq$}
  for general integrability parameters \mbox{$1 \leq p,q < \infty$},
  is an interesting problem on its own,
  compare Heinrich~\cite{He08SobI}.
\end{rem}

\subsection{Stratified sampling}

Let us introduce stratified sampling for the approximation of $\Int f$
for integrable functions defined on~$G = [0,1]^d$. 
For $m\in\N$ we split the unit cube $[0,1]^d$ into $m^d$~subcubes given by
\begin{equation*}
 G_{\veci} = \prod_{j=1}^d \left[ \frac{i_j}{m},\frac{i_j+1}{m}\right),
\end{equation*}
with  $\veci\in [m]^d := \{0,\dots,m-1\}^d$ and $\veci = (i_1,\dots,i_d)$.
Let $(\vecX_{\veci})_{\veci\in [m]^d}$ be a sequence of independent random variables
with $\vecX_{\veci}$ uniformly distributed in $G_{\veci}$.
Then, stratified sampling is given by
\begin{equation} \label{eq:strat}
  S_m^d(f) := \frac{1}{m^d} \sum_{\veci \in [m]^d} f(\vecX_{\veci}) \,,
\end{equation}
which uses $m^d$ function evaluations of $f$.
Compared to the separation of the main part,
stratified sampling is easier to implement.
In some cases we show that 
it provides
optimal results in terms of the $(\eps,\delta)$-complexity.
Since the structure only depends on the information budget
and not on~$\delta$ (compare the median trick),
we obtain a universal method in terms of the uncertainty.

We use Hoeffding's inequality which is, for the convenience of
the reader, stated in the following proposition.

\begin{prop}[Hoeffding's inequality] \label{prop:Hoeffding}
  Let~$Y_1,\ldots,Y_n$ be independent random variables
  supported on intervals of length~$b_i > 0$, that is,
  $\esssup Y_i - \essinf Y_i \leq b_i$.
  Then, for $S_n := \frac{1}{n} \sum_{i=1}^n Y_i$
  and~$\eps > 0$ we have
  \begin{equation*}
    \P\left\{|S_n - \expect S_n| > \eps \right\}
      \leq 2 \exp\left(-\frac{2 \, n^2 \, \eps^2}{\sum_{i=1}^n b_i^2}\right) \,.
  \end{equation*}
\end{prop}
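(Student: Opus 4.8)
The plan is to run the classical exponential-moment (Chernoff) argument, whose only nontrivial ingredient is Hoeffding's lemma bounding the moment generating function of a bounded random variable. First I would split the two-sided estimate via the union bound,
\[
  \P\{|S_n - \expect S_n| > \eps\}
    \leq \P\{S_n - \expect S_n > \eps\} + \P\{\expect S_n - S_n > \eps\},
\]
and observe that replacing every $Y_i$ by $-Y_i$ leaves the hypotheses intact and swaps the two events; hence it suffices to bound one of them by $\exp\bigl(-2n^2\eps^2/\sum_{i=1}^n b_i^2\bigr)$ and then double. Setting $X_i := Y_i - \expect Y_i$, the $X_i$ are independent, centred, and each almost surely confined to an interval of length at most $b_i$.

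Next, for arbitrary $s > 0$, Markov's inequality applied to $\exp\bigl(s\sum_i X_i\bigr)$ together with independence gives
\[
  \P\Bigl\{\textstyle\sum_{i=1}^n X_i > n\eps\Bigr\}
    \leq e^{-sn\eps}\prod_{i=1}^n \expect e^{sX_i}.
\]
The key step is Hoeffding's lemma: for a centred $X$ taking values in an interval of length $b$ one has $\expect e^{sX} \leq \exp(s^2 b^2/8)$. I would prove it by convexity of $t \mapsto e^{st}$ on the supporting interval $[a,a+b]$ (here $a \leq 0 \leq a+b$ since $\expect X = 0$): taking expectations makes the linear term vanish and leaves $\expect e^{sX} \leq e^{\psi(sb)}$, where $\psi(h) := -ph + \log(1 - p + pe^h)$ with $p := -a/b \in [0,1]$. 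A short computation shows $\psi(0) = \psi'(0) = 0$ and $\psi''(h) = u(1-u) \leq 1/4$ (with $u := pe^h/(1-p+pe^h)$), so Taylor's theorem yields $\psi(h) \leq h^2/8$.

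Plugging the lemma in gives $\P\{\sum_i X_i > n\eps\} \leq \exp\bigl(-sn\eps + \frac{s^2}{8}\sum_i b_i^2\bigr)$ for every $s > 0$; choosing $s = 4n\eps/\sum_i b_i^2$ minimises the exponent and produces the bound $\exp\bigl(-2n^2\eps^2/\sum_i b_i^2\bigr)$. Adding the identical lower-tail estimate furnishes the factor $2$ and completes the proof. The only mildly delicate point is Hoeffding's lemma; everything else is routine, and since the statement is classical one could alternatively just refer to a standard reference (e.g.\ Hoeffding's original article).
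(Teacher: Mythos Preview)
Your argument is the standard and correct proof of Hoeffding's inequality via the Chernoff bound and Hoeffding's lemma; there is no gap. The paper itself does not prove this proposition at all---it is merely stated ``for the convenience of the reader'' as a classical tool, so your write-up actually provides more detail than the paper. Your closing remark that one could simply cite Hoeffding's original article is precisely the route the paper takes.
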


First, we consider
H\"older classes $C^{\beta}([0,1]^d)$ with smoothness~$\beta \in (0,1]$,
see~\eqref{eq:Cbeta}.
Compare also~\cite{Bakh59} for the result in terms of the root mean squared error.

\begin{thm} \label{thm:HoelderStrat}
  For the classes of H\"older-continuous functions on $[0,1]^d$,
  stratified sampling achieves the optimal rate of convergence, namely
  \begin{equation*}
    e^{\MC}_{\prob}\left(n,\delta,C^{\beta}([0,1]^d)\right)
      \asymp n^{-\beta/d}
        \, \min\left\{1,\,\sqrt{\frac{\log \delta^{-1}}{n}}\right\} \,.
  \end{equation*}
\end{thm}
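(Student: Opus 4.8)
The lower bound $e^{\MC}_{\prob}(n,\delta,C^\beta([0,1]^d)) \succeq n^{-\beta/d}\min\{1,\sqrt{\log\delta^{-1}/n}\}$ is already supplied by Remark~\ref{rem:frac-smooth}, equation~\eqref{eq:HoelderLB}, so only the upper bound requires work. For the upper bound I would analyze the stratified sampling estimator $S_m^d(f)$ defined in~\eqref{eq:strat} with the choice $m := \lceil n^{1/d} \rceil$, so that it uses at most (a constant multiple of) $n$ function values, and show
\begin{equation*}
  \P\left\{ |S_m^d(f) - \Int f| > \eps \right\} \leq \delta
  \qquad\text{for } \eps \asymp m^{-\beta}\min\left\{1, \sqrt{\tfrac{\log\delta^{-1}}{m^d}}\right\}.
\end{equation*}

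\textbf{Key steps for the upper bound.} First I would check that $\expect S_m^d(f) = \Int f$, since each $\vecX_{\veci}$ is uniform on $G_{\veci}$ and the cubes partition $[0,1]^d$. Next comes the central estimate: the summand $\frac{1}{m^d} f(\vecX_{\veci})$ is an independent random variable whose range, by H\"older continuity, satisfies $\esssup_{G_{\veci}} f - \essinf_{G_{\veci}} f \leq |f|_{C^\beta} \cdot (\diam_\infty G_{\veci})^\beta = |f|_{C^\beta} m^{-\beta}$; hence in the notation of Proposition~\ref{prop:Hoeffding} we may take $b_{\veci} = m^{-d} \cdot |f|_{C^\beta} m^{-\beta} \leq m^{-d-\beta}$ (using $|f|_{C^\beta}\leq 1$ on the unit ball). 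Therefore $\sum_{\veci} b_{\veci}^2 \leq m^d \cdot m^{-2d-2\beta} = m^{-d-2\beta}$, and Hoeffding's inequality gives
\begin{equation*}
  \P\left\{ |S_m^d(f) - \Int f| > \eps \right\}
    \leq 2\exp\left( - 2\, m^{d+2\beta}\, \eps^2 \right).
\end{equation*}
Setting the right-hand side equal to $\delta$ and isolating $\eps$ yields $\eps = m^{-\beta} \sqrt{\tfrac{1}{2} m^{-d}\log(2/\delta)} \asymp m^{-\beta}\sqrt{\log\delta^{-1}/m^d}$, valid in the regime where this quantity does not exceed the deterministic worst-case level $\asymp m^{-\beta}$ (which is where $\log\delta^{-1} \preceq m^d \asymp n$); when $\log\delta^{-1} \succeq n$ one falls back on the deterministic bound $e^{\deter}(n,C^\beta) \preceq n^{-\beta/d}$ obtained from midpoint evaluation on the same partition, giving the minimum of the two terms. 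Substituting $m \asymp n^{1/d}$ finishes the argument.

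\textbf{Main obstacle.} There is no deep obstruction here — the proof is essentially a clean application of Hoeffding's inequality to the stratified estimator. The only points requiring mild care are bookkeeping ones: making the cardinality exactly (at most) $n$ rather than $m^d = \lceil n^{1/d}\rceil^d$, which costs only a constant; correctly matching the two regimes $\log\delta^{-1} \preceq n$ and $\log\delta^{-1} \succeq n$ at the crossover so that the stated minimum emerges; and noting that stratified sampling genuinely needs no knowledge of $\delta$, so the single family $(S_m^d)_m$ is simultaneously near-optimal for all confidence levels, as remarked before the theorem. The agreement of exponents with~\eqref{eq:HoelderLB} is immediate once both bounds are in hand.
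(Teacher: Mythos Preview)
Your proposal is correct and follows essentially the same approach as the paper: bound the oscillation on each subcube via H\"older continuity, apply Hoeffding's inequality (Proposition~\ref{prop:Hoeffding}), and match against the lower bound from Remark~\ref{rem:frac-smooth}. Two minor points where the paper's version is slightly cleaner: first, the paper takes $m := \lfloor n^{1/d} \rfloor$ (floor, not ceiling), so that $m^d \leq n$ and the method genuinely stays within the information budget~$n$; second, rather than invoking a separate midpoint rule for the regime $\log\delta^{-1} \succeq n$, the paper simply observes that the stratified estimator itself satisfies the almost-sure bound $|S_m^d(f) - \Int f| \leq m^{-\beta}$ (since each $f(\vecX_{\veci})$ lies within $m^{-\beta}$ of the local average), so the same method delivers both terms of the minimum without any case distinction on~$\delta$.
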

\begin{proof}
  Concerning the lower bounds,
  see \eqref{eq:HoelderLB} in Remark~\ref{rem:frac-smooth}.
  
  For the upper bounds,
  we start with the case~$n = m^d$ with $m \in \N$
  and employ~$S_m^d$. Obviously, this method is unbiased,
  i.e.,\ $\expect S_m^d(f) = \Int f$.
  By H\"older continuity, the random variables
  $Y_{\veci} = f(\vecX_{\veci})$ are spread on intervals
  of length $b_{\veci} \leq m^{-\beta}$ for $|f|_{C^{\beta}} \leq 1$.
  This implies \mbox{$|S_m^d(f) - \Int f| \leq m^{-\beta} = n^{-\beta/d}$}.
  Hoeff\-ding's inequality, Proposition~\ref{prop:Hoeffding},
  leads to
  \begin{equation*}
    \P\{|S_m^d(f) - \Int f| > \eps\}
      \leq 2 \exp\left( - \frac{2 \, m^{2d} \, \eps^2
                              }{m^d \cdot m^{-2\beta}}
                 \right)
      = 2 \exp\left(- 2 \, m^{d+2\beta} \eps^2\right) \,.
  \end{equation*}
  This is guaranteed to be at most~$\delta$ for
  \begin{equation*}
    \eps = \frac{1}{\sqrt{2}}
             \, m^{-(\beta + d/2)}
             \, \sqrt{\log \frac{2}{\delta}}
         = \frac{1}{\sqrt{2}}
             \, n^{-(\beta/d + 1/2)}
             \, \sqrt{\log \frac{2}{\delta}} \,.
  \end{equation*}
  
  Given an information budget $n \in \N$,
  we choose~$m := \lfloor n^{1/d} \rfloor$.
  Employing the method $S_m^d$, see~\eqref{eq:strat},
  we actually only use $m^d \leq n$ function values.
  For $n \geq 2^d$ we have $m \geq \frac{1}{2}\,n^{1/d}$,
  hence we obtain the stated asymptotics.
\end{proof}

Now we consider the isotropic Sobolev classes $W_p^1([0,1]^d)$ of smoothness~$1$.
For Hoeffding's inequality to be applicable,
we need $W_p^1([0,1]^d) \embed L_{\infty}([0,1]^d)$
which is the case for~$p > d$.

\begin{thm}
  Stratified sampling
  leads to
  \begin{equation*}
    e^{\MC}_{\prob}(n,\delta,W_p^1([0,1]^d))
      \preceq n^{-1/d}
          \, \min\left\{1,\, n^{-(1-1/q)} \sqrt{\log \delta^{-1}}\right\}\,,
      \qquad\text{for $p > d$,}
  \end{equation*}
  where $q := \min\{p,2\}$.
  For~$p \geq 2$,
  this perfectly matches 
  the lower bounds from Theorem~\ref{thm:LBs}.
\end{thm}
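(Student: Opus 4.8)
The plan is to mirror the proof of Theorem~\ref{thm:HoelderStrat}: use stratified sampling $S_m^d$ with $m\asymp n^{1/d}$, and replace the H\"older oscillation bound by a \emph{local} Sobolev embedding estimate on each subcube, then invoke Hoeffding's inequality.

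As in Theorem~\ref{thm:HoelderStrat} I would first reduce to $n=m^d$ with $m\in\N$ and pass to general $n$ via $m:=\lfloor n^{1/d}\rfloor$. The method $S_m^d$ is unbiased, $\expect S_m^d(f)=\sum_{\veci\in[m]^d}\int_{G_\veci}f(\vecx)\dint\vecx=\Int f$, so $f(\vecX_\veci)$ lies in an interval whose length $b_\veci$ is the oscillation of $f$ on $G_\veci$. The decisive ingredient is Morrey's inequality in the regime $p>d$: on the reference cube $[0,1]^d$ there is a constant $C=C(d,p)$ bounding the oscillation of $f$ by $C\|\nabla f\|_{L_p([0,1]^d)}$, and the scaling $\psi(\vecx)=\varphi(m\vecx-\veci)$ already used in Section~\ref{sec:LBs} transfers this to $b_\veci\le C\,m^{-(1-d/p)}\,a_\veci$ with $a_\veci:=\|\nabla f\|_{L_p(G_\veci)}$. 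Since the $G_\veci$ are essentially disjoint, the integrability property~\eqref{eq:integrability} gives $\sum_\veci a_\veci^p\le C'$ whenever $\|f\|_{W_p^1}\le 1$.

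I would then feed the ranges $b_\veci$ into Hoeffding's inequality (Proposition~\ref{prop:Hoeffding}), whose exponent involves $\sum_\veci b_\veci^2\le C^2 m^{-2(1-d/p)}\sum_\veci a_\veci^2$, and split into two regimes. For $d<p<2$ (so $q=p$) the embedding $\ell_p\hookrightarrow\ell_2$ gives $\sum_\veci a_\veci^2\le(\sum_\veci a_\veci^p)^{2/p}\le C''$, and solving the Hoeffding bound for $\eps$ yields uncertainty at most $\delta$ for $\eps\asymp n^{-1/d}\,n^{-(1-1/q)}\sqrt{\log\delta^{-1}}$. For $p\ge 2$ (so $q=2$), H\"older across the $N=m^d$ cubes gives $\sum_\veci a_\veci^2\le N^{1-2/p}(\sum_\veci a_\veci^p)^{2/p}$, hence $\sum_\veci b_\veci^2\le C m^{d-2}$, and the same computation again produces $\eps\asymp n^{-1/d}\,n^{-1/2}\sqrt{\log\delta^{-1}}$, which equals $n^{-1/d}\,n^{-(1-1/q)}\sqrt{\log\delta^{-1}}$. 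Alongside this I would record the sure bound $|S_m^d(f)-\Int f|\le m^{-d}\sum_\veci b_\veci$, which by H\"older and $\sum_\veci a_\veci^p\le C'$ is of order $m^{-1}\asymp n^{-1/d}$ and therefore supplies the ``$1$'' in the minimum once $\sqrt{\log\delta^{-1}}$ grows too large. Combining the two bounds gives the assertion; the matching lower bound for $p\ge 2$ is Theorem~\ref{thm:LBs}.

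The step I expect to be the main obstacle is the local embedding estimate: obtaining the oscillation bound on each $G_\veci$ with exactly the power $m^{-(1-d/p)}$ and a constant independent of $m$ and $\veci$. This needs Morrey's inequality on a fixed reference cube (cubes are admissible Lipschitz domains, and translation invariance removes the $\veci$-dependence) combined with the derivative scaling relation of Section~\ref{sec:LBs}; a minor point is that $\|\nabla f\|_{L_p}$ differs from the seminorm entering $\|f\|_{W_p^r}$ only by a $d$-dependent constant. Everything else is the same bookkeeping as in Theorem~\ref{thm:HoelderStrat}.
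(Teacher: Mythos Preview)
Your proposal is correct and follows essentially the same route as the paper: local oscillation bounds on each subcube via the Sobolev embedding $W_p^1\hookrightarrow L_\infty$ (Morrey), then Hoeffding with an $\ell_p$--$\ell_2$ comparison of the local seminorms. The only organisational difference is that the paper splits into $d=1$ (using the fundamental theorem of calculus directly, which is the only place $d<p<2$ can occur) and $d\ge 2$ (where $p>d$ forces $q=2$), whereas you case-split on $p<2$ versus $p\ge 2$ and apply Morrey uniformly; the two decompositions coincide since $d<p<2$ implies $d=1$.
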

\begin{proof}
  We start with the one-dimensional case considering
  the method~$S_n^1$, see~\eqref{eq:strat}.
  Hence, the unit interval~$[0,1]$ is split into intervals $G_0,\ldots,G_{n-1}$
  of length~$n^{-1}$.
  For $x_1 < x_2$ from $[0,1]$ we have
  \begin{equation*}
    |f(x_2) - f(x_1)|
      = \left|\int_{[x_1,x_2]} f'(x) \dint x\right|
      \leq \int_{[x_1,x_2]} |f'(x)| \dint x \,.
  \end{equation*}
  Hence, on the $i$th interval $G_i$ we have
  \begin{equation*}
    b_i
      \leq \int_{G_i} |f'(x)| \dint x
      \leq n^{-1} \left(n \int_{G_i} |f'(x)|^q \dint x\right)^{1/q}
      = n^{-(1-1/q)} \, \|f'\|_{L_q(G_i)} \,,
  \end{equation*}
  where we used Jensen's inequality.
  Furthermore
  \begin{equation*}
    \|f'\|_{L_q([0,1])}
      = \left(\sum_{i=1}^n \|f'\|_{L_q(G_i)}^q\right)^{1/q}
      \geq n^{1-1/q} \, \left(\sum_{i=1}^n b_i^q\right)^{1/q}
\geq
        n^{1-1/q} \, \left(\sum_{i=1}^n b_i^2\right)^{1/2} \,,
  \end{equation*}
  exploiting $q\leq 2$ in the last inequality.
  Applying Hoeffding's inequality, Proposition~\ref{prop:Hoeffding},
  for $\|f\|_{W_p^1([0,1])} \leq 1$ we obtain
  \begin{equation*}
    \P\{|S_n^1(f) - \Int f| > \eps\}
        \leq 2 \exp\left( - 2 \, n^{4-2/q} \, \eps^2\right) \,.
  \end{equation*}
  This is guaranteed to be at most~$\delta$ for
  \begin{equation*}
    \eps = \frac{1}{\sqrt{2}} \, n^{-(2 - 1/q)}
             \, \sqrt{\log \frac{2}{\delta}} \,,
  \end{equation*}
  which shows the assertion for $d=1$.
  
  In higher dimension, $d \geq 2$,
  splitting~$[0,1]^d$ into~$m^d$ subcubes~$G_{\veci}$ with \mbox{$\veci \in [m]^d$},
  we exploit the embedding $W_p^1([0,1]^d) \embed L_{\infty}([0,1]^d)$.
  Namely, incorporating scaling 
  we bound the spread of function values within~$G_{\veci}$ by
  \begin{equation*}
    b_{\veci}
      := \esssup_{G_{\veci}} f - \essinf_{G_{\veci}} f
      \leq C \, m^{d/p-1} \, \|f\|_{W_p^1(G_{\veci})}
%           \left(\sum_{j=1}^d \|\partial_{x_j} f\|_{L_p(G_{\veci})}^p
%           \right)^{1/p}
      \,,
  \end{equation*}
  with some constant~$C>0$ depending only on~$p$ and $d$.
  From this, with~$p > d \geq 2$, we conclude
  \begin{equation*}
    \Biggl(\sum_{\veci \in [m]^d} b_{\veci}^2\Biggr)^{1/2}
      \leq m^{d(1/2-1/p)}
        \Biggl(\sum_{\veci \in [m]^d} b_{\veci}^p\Biggr)^{1/p}
      \leq C \, m^{-(1-d/2)} \, \|f\|_{W_p^1([0,1]^d)} \,,
  \end{equation*}
  compare~\eqref{eq:integrability}.
  Choosing $m := \lceil n^{1/d} \rceil$, we obtain the right order by
  applying Hoeffding's inequality similarly to the one-dimensional case.
\end{proof}

\begin{rem}
  The one-dimensional problem
  contains cases of small integrability~$1 < p < 2$
  for which we do not obtain the optimal $\delta$-dependence.
  It is not known to us whether this is
  a deficiency of the method or of the proof.
  In that case, we may use separation of the main part,
  which is equally simple as~$f$ may be approximated on~$G_{\veci}$
  by just one function value.
  
  In the case of discontinuous functions, $p < d$,
  it remains challenging to find methods which detect and discourage outliers
  within stratified sampling.
  One idea might be to take several function values out of each subcube.
  This could improve also on the above mentioned case~$d=1$ and $1 < p < 2$.
  Any result in that direction might offer reasonable alternatives
  to control variates,
  where the case of small smoothness is also open.
\end{rem}

\section{Challenges in mixed smoothness spaces}
\label{sec:Wmix}

In the recent years spaces of dominating mixed smoothness
gained a lot of interest in the study of high-dimensional problems.
For a survey on this topic we refer to the paper of
D\~ung, Temlyakov, and Ullrich~\cite{DTU18*}.

For integer smoothness~$r \in \N$
and integrability $1 \leq p \leq \infty$,
on domains $G \subset \R^d$,
Sobolev spaces of dominating mixed smoothness can be defined by
\begin{equation*}
  \Wmix_p^{\mix,r}(G)
    :=\biggl\{f \in L_p(G) \,\bigg|\,
              \|f\|_{\Wmix_p^{{\mix},\,r}(G)}
                := \biggl(\sum_{\substack{\vecalpha \in \N_0^d\\
                                   |\vecalpha|_{\infty} \leq r}}
                     \|D^{\vecalpha} f\|_{L_p(G)}^p\biggr)^{1/p}
                \leq \infty
       \biggr\} \,.
\end{equation*}

Lower bounds 
of the integration problem
% proofs 
can be shown
by scaling bump functions $\varphi \colon [0,1]^d \to \R$ in one coordinate, that is,
for~$m \in \N$ we define functions
\mbox{$\psi_i(\vecx) := \varphi(m x_1 - i,x_2,\ldots,x_d)$},
where $i \in \{0,\ldots,m-1\} = [m]$.
By using those, similarly to Theorem~\ref{thm:LBs}
one can obtain
\begin{equation} \label{eq:WmixLB}
  e^{\MC}_{\prob}(n,\delta,\Wmix_p^{\mix,r}([0,1]^d))
    \succeq \min\left\{ n^{-r},\,
                        n^{-(r + 1-1/q)} \, (\log\delta^{-1})^{1-1/q}
                \right\} \,.
\end{equation}

When talking about upper bounds it is useful to note that the integration problem
is as difficult for the non-periodic spaces~$\Wmix_p^{\mix,r}(G)$
as for the zero-boundary space 
$\mathring\Wmix_p^{\mix,r}(G)
  := \{f \in \Wmix_p^{\mix,r}(\R^d) \mid \supp f \subseteq G\}$.
Namely, the integral of any function~$f \in \Wmix_p^{\mix,r}([0,1]^d)$,
via a change of variables, can be traced back to the integral of a function
$h := |\det\Phi'| \cdot (f \circ \Phi) \in \mathring{\Wmix}_p^{\mix,r}([0,1]^d)$
with zero boundary condition,
where $\Phi\colon [0,1]^d \to [0,1]^d$ is a smooth bijection.
That way we only loose a constant,
see Nguyen, Ullrich, and Ullrich~\cite{NgUU17}.
Let us mention that our lower bounds are based on bump functions with zero boundary,
so the lower bounds hold with the same constants.

The optimal order of convergence in terms of the root mean squared error
is determined by Ullrich~\cite{mU17}, namely
\begin{equation*}
  e^{\MC}_{\twoMean}(n,\Wmix_p^{\mix,r}([0,1]^d))
    \asymp n^{-(r+1-1/q)} \,, \qquad\text{for $r \geq \max\{1/p-1/2,0\},$}
\end{equation*}
where $q = \min\{p,2\}$.
The result is based on 
% an algorithm 
a randomly shifted and dilated Frolov rule,
developed by Krieg and Novak~\cite{KrN17}, given by
\begin{equation*}
  Q_{B,\vecv}(f)
    := \frac{1}{|\det B|}
          \sum_{\vecm \in \Z^d}
            f(B^{-\top}(\vecm+\vecv))
    \,,
\end{equation*}
where $f \in \mathring{\bf W}_p^{\mix,r}([0,1]^d)$,
which is of course only evaluated inside~$[0,1]^d$.
Here, \mbox{$B = \diag(\vecu) B_n$} with dilation random variable $\vecu$ and
independent shift random variable $\vecv$ distributed according to the uniform
distribution in $[1/2,3/2]^d$ and $[0,1]^d$, respectively, as well as 
a suitable generator matrix \mbox{$B_n = n^{1/d} B_1$}.
`Suitable' means~$\det B_n = n$ and
$\prod_{j=1}^d |(B_1 \vecm)_j| \geq c > 0$
for all $\vecm \in \Z^d\setminus\{0\}$.
In particular, the expected number of function evaluations is~$n$.
(As mentioned before, the lower bounds from Section~\ref{sec:LBs}
can be extended to methods with varying cardinality
which will only affect constants.)
Via Theorem~\ref{thm:med} one can build a method by independent repetition
which provides
\begin{equation}
  e^{\MC}_{\prob}(n,\delta,\Wmix_p^{\mix,r}([0,1]^d))
    \preceq \left(\frac{\log \delta^{-1}}{n}\right)^{r + 1-1/q}
    \qquad \text{for $n \succeq \log \delta^{-1}$,}
\end{equation}
with~$q := \min\{p,2\}$.
Unfortunately, we do not achieve the optimal dependence on~$\delta$.
The original algorithm alone does not possess desirable confidence guarantees,
as the following one-dimensional counter example shows.
This is not surprising as the number of random parameters
is fixed by the dimension and
thus we do not expect to observe concentration phenomena,
which is in contrast to stratified sampling.

\begin{example}
  We consider the integration problem in a one dimensional setting on $\mathring{\Wmix}_2^{\mix,r}([0,1])$.
  The random Frolov rule that uses $n$~function values on average
  is determined by
  \begin{equation*}
    Q_n(f) := \frac{1}{un} \sum_{m \in \Z} f\left(\frac{m+v}{un}\right) \,,
  \end{equation*}
  with independent random variables $u$ and $v$ uniformly distributed in $[1/2,3/2]$ and $[0,1]$,
  respectively.
  Let~$\varphi \in \mathring{\Wmix}_2^{\mix,r}([0,1])$ be a bump function
  with integral $\gamma_0 := \int_0^1 \varphi \dint x$
  and norm~$\|\varphi\|_{\Wmix_2^{\mix,r}} \leq 1$.
  For 
  \begin{equation*}
    f_n(x) := (2n)^{-r} \sum_{k=0}^{n-1} \varphi(2nx-2k) \,
  \end{equation*}
  observe that $\|f_n\|_{\Wmix_2^{\mix,r}} \leq 1$ and
  $\int_0^1 f_n \dint x = \gamma_0/2^{r+1} \cdot n^{-r}$.
  Furthermore, the algorithm returns~$0$
  if all the function values are computed inside
  $\bigcup_{k=0}^{n-1} \left[\frac{2k+1}{2n},\frac{k+1}{n}\right]$,
  which is where~$f_n$ vanishes.
  This happens
  if \mbox{$\frac{v}{un} \in [\frac{1}{2n},\frac{1}{n}]$}
  and \mbox{$\frac{n-1+v}{un} \in [\frac{2n-1}{2n},1]$},
  in particular for shifts \mbox{$v \in [\frac{1}{2},\frac{3}{4}]$}
  and dilations \mbox{$u \in [1-\frac{1}{4n},1]$}.
  This means, with probability exceeding $\delta_n := \frac{1}{16n}$
  the error is $\gamma_0/2^{r+1} \cdot n^{-r}$,
  hence,
  \begin{equation*}
    e^{\MC}_{\prob}(Q_n,\delta_n,\mathring{\Wmix}_2^{\mix,r}([0,1]))
      \succeq n^{-r}
      \succ n^{-(r+1/2)} \, \sqrt{\log \delta_n^{-1}}
      \asymp n^{-(r+1/2)} \, \sqrt{\log n} \,.
  \end{equation*}
  This reveals a significant gap to the general lower bound~\eqref{eq:WmixLB}.
  \qed
\end{example}

Separation of the main part does not provide the optimal rate in~$n$,
but the dependence on~$\delta$ can be reduced.
Since we may restrict to the integration problem
for functions with zero boundary condition, $\mathring{\Wmix}_p^{\mix,r}([0,1])$,
we may apply results for the approximation of periodic functions, denoted by
$\widetilde{\Wmix}_p^{\mix,r}([0,1]^d)$. Namely,
\begin{equation*}
  e^{\deter}(n,\widetilde{\Wmix}_p^{\mix,r}([0,1]^d) \embed L_p)
    \preceq n^{-r} \, (\log n)^{(r+1/2)(d-1)} \,,
  \qquad\text{for~$1 < p < \infty$,}
\end{equation*}
which can be found in~\cite[(5.11)]{DTU18*}.
Applying Theorem~\ref{thm:separation},
for \mbox{$n \succeq \log \delta^{-1}$}
we conclude that
\begin{equation}
  e^{\MC}_{\prob}(n,\delta,\Wmix_p^{\mix,r}([0,1]^d))
    \preceq n^{-(r+1-1/q)}
              \, (\log n)^{(r+1/2)(d-1)}
              \, (\log \delta^{-1})^{1-1/q} \,,
\end{equation}
where~$q := \min\{p,2\}$.
Here the $\delta$-dependence is optimal,
but the rate in $n$ is affected by logarithmic terms.

Finally, deterministic quadrature is known to achieve 
\begin{equation} \label{eq:WmixDeter}
  e^{\deter}(n,\Wmix_p^{\mix,r}([0,1]^d))
    \asymp n^{-r} (\log n)^{(d-1)/2} \,,
    \qquad\text{for~$1 < p < \infty$,}
\end{equation}
see~\cite[Theorem~8.14]{DTU18*}.
This catches the case~$n \prec \log \delta^{-1}$.

It remains a challenging open problem to find randomized integration methods which have
the right dependence on the uncertainty
while fully exploiting the smoothness.

\appendix

\section{Technical Proofs}

In Section~\ref{sec:LBs} we need the following two inequalities
about binomial sums. 
The first lemma is a minor extension of \cite[Proposition~7.3.2]{MaVo},
holding also for odd~$k$, and with slightly improved constants.

\begin{lem} \label{lem:BinomTail1}
  For~$k \in \N$ and $t \in \N_0$ we have
  \begin{multline*}
    2^{-k} \sum_{j=0}^{\lfloor k/2 \rfloor - t} \binom{k}{j}
      \,=\, 2^{-k} \sum_{j=\lceil k/2 \rceil+t}^{k} \binom{k}{j} \\
      \,\geq\, \frac{1}{2 + 4/\sqrt{\pi}}
                  \,\begin{cases} \displaystyle
                      \exp\left(-\frac{16 \, (\log 2) \, t^2}{k%+3
                                                                  }
                          \right)
                        \quad&\text{for odd~$k$ and $t \in [0,\frac{k+3}{8}]$,}
                      \vspace{5pt} \\
                      \displaystyle
                      \exp\left(-\frac{16 \, (\log 2) \, (t-1/2)^2}{k%+2
                                                                        }
                          \right)
                        \quad&\text{for even~$k$ and $t \in [0,\frac{k+6}{8}]$.}
                    \end{cases}
  \end{multline*}
\end{lem}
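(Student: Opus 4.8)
The identity between the two binomial sums is the substitution $j\mapsto k-j$ together with $\binom{k}{j}=\binom{k}{k-j}$, so it suffices to bound from below the single quantity $P_k:=2^{-k}\sum_{j=0}^{\lfloor k/2\rfloor-t}\binom{k}{j}$. The plan is the classical ``block of binomial coefficients'' argument. Pick a block length $s$ of order $\sqrt{k}$ (to be tuned) and use unimodality of $j\mapsto\binom{k}{j}$ to write
\begin{equation*}
  2^{-k}\sum_{j=0}^{\lfloor k/2\rfloor-t}\binom{k}{j}
    \;\geq\; 2^{-k}\sum_{j=\lfloor k/2\rfloor-t-s+1}^{\lfloor k/2\rfloor-t}\binom{k}{j}
    \;\geq\; s\cdot 2^{-k}\binom{k}{\lfloor k/2\rfloor-t-s+1}\,,
\end{equation*}
reducing the estimate to one binomial coefficient sitting at distance of order $t+s$ below the centre $k/2$. (One keeps $\lfloor k/2\rfloor-t-s+1\geq0$, which is harmless as $t$ is at most of order $k$.)

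For that single coefficient I would telescope the ratios of neighbouring coefficients: for $j<k/2$ one has $\binom{k}{j}/\binom{k}{j+1}=(j+1)/(k-j)=1-x_j$ with $x_j=(k-2j-1)/(k-j)\in[0,1)$, so that, writing $m_0$ for a central index ($m_0=\lfloor k/2\rfloor$ for even $k$, $m_0=(k+1)/2$ for odd $k$),
\begin{equation*}
  \binom{k}{\lfloor k/2\rfloor-t-s+1}
    \;=\; \binom{k}{m_0}\prod_{j}(1-x_j)\,,
\end{equation*}
the product running over the roughly $t+s$ indices between $\lfloor k/2\rfloor-t-s+1$ and $m_0-1$. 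The hypothesis $t\leq\tfrac{k+3}{8}$ (odd $k$), resp.\ $t\leq\tfrac{k+6}{8}$ (even $k$), is precisely what keeps every $x_j$ below a threshold $x_{\max}<1$ on which an elementary inequality $1-x\geq e^{-cx}$ holds with $c$ only slightly above $1$; since the $x_j$ behave like an arithmetic progression, $\sum_j x_j$ is of order $(t+s)^2/k$, and hence
\begin{equation*}
  2^{-k}\binom{k}{\lfloor k/2\rfloor-t-s+1}
    \;\geq\; 2^{-k}\binom{k}{m_0}\,\exp\!\left(-c'\,\frac{(t+s)^2}{k}\right)
\end{equation*}
with $c'$ close to $2$. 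The central coefficient is controlled by a Wallis/Stirling estimate of the shape $2^{-k}\binom{k}{m_0}\geq\sqrt{2/(\pi k)}\,(1-O(1/k))$ (for odd $k$ first rewrite $2^{-k}\binom{k}{m_0}=4^{-(k+1)/2}\binom{k+1}{(k+1)/2}$), so choosing $s$ of the exact order $\sqrt{\pi k/2}$ turns $s\cdot 2^{-k}\binom{k}{m_0}$ into an absolute constant; this is how the factor $\tfrac{1}{2+4/\sqrt{\pi}}$ appears.

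It remains to match the target exponent $16\log 2\approx 11$ across the whole range of $t$, and here I would split into two regimes. For $t$ at least a suitable multiple of $\sqrt{k}$ one takes $s$ with $s/t$ small (still $s$ of order $\sqrt k$), so that $(t+s)^2\leq(1+o(1))\,t^2$ while $x_{\max}$ stays away from $1$; then $c'(t+s)^2/k\leq 16(\log 2)\,t^2/k$, the generic rate $16\log 2$ versus the true Gaussian rate $c'\approx2$ giving ample slack. For the remaining bounded range $t\lesssim\sqrt k$ the right-hand side is an absolute constant, and one bounds $P_k$ below directly: for odd $k$, $\sum_{j=0}^{(k-1)/2}\binom{k}{j}2^{-k}=\tfrac12$, whence $P_k=\tfrac12-\sum_{j=(k-1)/2-t+1}^{(k-1)/2}\binom{k}{j}2^{-k}\geq\tfrac12-t\cdot 2^{-k}\binom{k}{m_0}$, which exceeds $\tfrac{1}{2+4/\sqrt{\pi}}$ once $t$ is a small enough multiple of $\sqrt k$; for even $k$ the same computation carries an additional half of the central term, which is exactly why $t-1/2$ replaces $t$ there. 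The real obstacle is not any single idea but the bookkeeping: tuning $s$, the regime threshold, and the constant $c$ in $1-x\geq e^{-cx}$ so that the constants come out to be exactly $\tfrac{1}{2+4/\sqrt{\pi}}$ and $16\log 2$; verifying that $t\leq\tfrac{k+3}{8}$ resp.\ $\tfrac{k+6}{8}$ is precisely what makes $x_{\max}$ usable; and disposing of small $k$ by direct computation.
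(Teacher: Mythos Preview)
Your overall plan---two regimes in $t$, a block-of-coefficients bound, telescoping to the central coefficient, and a Stirling-type estimate for $\binom{k}{\lfloor k/2\rfloor}$---is exactly the paper's strategy. The small-$t$ regime is handled identically: from $\binom{k}{\lfloor k/2\rfloor}<2^k/\sqrt{\pi\lceil k/2\rceil}$ one gets $P_k\geq\tfrac12 - t/\sqrt{\pi\lceil k/2\rceil}$, which exceeds $\tfrac{1}{2+4/\sqrt{\pi}}$ precisely for $t\leq\sqrt{\lceil k/2\rceil}/(1+2/\sqrt{\pi})$.

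The one substantive difference is in the large-$t$ regime: the paper does \emph{not} take a block of length $s\sim\sqrt{k}$ but rather $s=t$, so the block runs from $\lfloor k/2\rfloor-2t+1$ to $\lfloor k/2\rfloor-t$ and the prefactor is $t\cdot 2^{-k}\binom{k}{\lfloor k/2\rfloor}\geq t/(2\sqrt{\lceil k/2\rceil})$. At the threshold $t=\sqrt{\lceil k/2\rceil}/(1+2/\sqrt{\pi})$ this is exactly $\tfrac{1}{2+4/\sqrt{\pi}}$, so the two regimes dovetail with no tuning. The paper also telescopes more crudely than you propose: instead of summing the $x_j$, it bounds every one of the $2t-1$ ratios by the worst one, obtaining $(1-x)^{2t-1}$ with a single $x=2t/(\lceil k/2\rceil+1)$ (odd $k$) or $x=(2t-1)/(k/2+1)$ (even $k$), and then applies $1-x\geq e^{-2(\log 2)x}$ on $[0,1/2]$. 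The range restriction $x\leq 1/2$ is \emph{exactly} the hypothesis $t\leq(k+3)/8$ resp.\ $(k+6)/8$; this is also where the constant $16\log 2=2\cdot(2t-1)\cdot 2(\log 2)\cdot x/t^2\cdot k$ (roughly) comes from.

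Your version with $s\sim\sqrt{k}$ should also go through, but the matching is more delicate: near the threshold $t\approx 0.33\sqrt{k}$ you need $s$ at least that large for the prefactor, so $(t+s)^2\approx 4t^2$, and you then rely on the slack between $c'\approx 2$ and $16\log 2\approx 11$. Also, your statement that $c$ in $1-x\geq e^{-cx}$ is ``only slightly above $1$'' is too optimistic: at the boundary of the allowed $t$-range the relevant $x_j$ reach $1/2$, forcing $c=2\log 2\approx 1.39$. The paper's choice $s=t$ removes all of this bookkeeping.
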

\begin{proof}
  First, recall that~\mbox{$\binom{k}{\lfloor k/2 \rfloor}
    < 2^k / \sqrt{\pi \, \lceil k/2 \rceil}$},
  which for even $k$ follows from Stirling's formula and
  for odd~$k$ can be derived from~$k+1$ via Pascal's rule.
  Hence,
  \begin{align*}
    2^{-k} \sum_{j=0}^{\lfloor k/2 \rfloor - t} \binom{k}{j}
      &\,\geq\, \frac{1}{2} - 2^{-k} \, t \, \binom{k}{\lfloor k/2 \rfloor}
      \,>\, \frac{1}{2} - \frac{t}{\sqrt{\pi \, \lceil k/2 \rceil}} \,. 
  \end{align*}
  For~\mbox{$0 \leq t \leq \sqrt{\lceil k/2 \rceil}/(1+2/\sqrt{\pi})$},
   this gives the absolute lower bound $\frac{1}{2 + 4/\sqrt{\pi}}$.
  
  For larger~$t$ we follow the approach of \cite[Proposition~7.3.2]{MaVo}.
  Basic estimates yield
  \begin{align*}
    2^{-k} \sum_{j=0}^{\lfloor k/2 \rfloor - t} \binom{k}{j}
      &\,\geq\, 2^{-k}
            \sum_{j=\lfloor k/2 \rfloor - 2t + 1}^{\lfloor k/2 \rfloor - t}
            \binom{k}{j} \\
      &\,\geq\, 2^{-k} \, t \, \binom{k}{\lfloor k/2 \rfloor-2t+1} \\
      &\,=\, 2^{-k} \, t \, \binom{k}{\lfloor k/2 \rfloor} 
            \prod_{i=1}^{2t-1}
            \frac{\lfloor k/2 \rfloor - 2t + 1 + i}{\lceil k/2 \rceil + i}\\
        &\,\geq\, 2^{-k} \, t \, \binom{k}{\lfloor k/2 \rfloor} 
    \left(\frac{\lfloor k/2 \rfloor - 2t + 2}{\lceil k/2 \rceil + 1} \right)^{2t-1}
            \,.
  \end{align*}
  Next, we use \mbox{$1-x \geq \exp(-2 \,(\log 2)\, x)$}
  for~\mbox{$0 \leq x \leq 1/2$}.
  For odd~$k$ we set $x = 2t/(\lceil k/2 \rceil + 1)$ and 
  for even~$k$ we set~\mbox{$x = (2t - 1)/(k/2 + 1)$}.
  This is where~$t \leq (k+6)/8$ for even~$k$,
  and~$t \leq (k+3)/8$ for odd~$k$,
  comes into play.
  Finally, we use
  \mbox{$\binom{k}{\lceil k/2 \rceil}
           \geq 2^k/(2 \sqrt{\lceil k/2 \rceil})$},
  and obtain
  \begin{align*}
    2^{-k} \sum_{j=0}^{\lfloor k/2 \rfloor - t} \binom{k}{j}
     &\,\geq\, \frac{t}{2 \sqrt{\lceil k/2 \rceil}}
          \,\begin{cases}
              \displaystyle
              \exp\Bigl[- \frac{8 \, (\log 2) \, t \, (t - 1/2)
                              }{\lceil k/2 \rceil + 1}
                  \Bigr]
                &\quad\text{for odd~$k$,}\\
              \displaystyle
              \exp\Bigl[- \frac{8 \, (\log 2) \, (t-1/2)^2
                              }{k/2 + 1}
                  \Bigr]
                &\quad\text{for even~$k$.}\\
            \end{cases}
  \end{align*}
  For \mbox{$t \geq \sqrt{\lceil k/2 \rceil}/(1+2/\sqrt{\pi})$},
  the prefactor simplifies as stated in the claimed inequality.
\end{proof}

\begin{lem} \label{lem:BinomTail2}
  For $k,k'\in\N_0$ we have for all $k\geq k'$ that
  \begin{equation*}
    2^{-k} \Biggl[\sum_{j=0}^{\left\lfloor \frac{k-k'}{2}
                              \right\rfloor}
                   \binom{k}{j}
                  + \sum_{j=\left\lceil \frac{k+k'+1}{2} \right\rceil}^k
                     \binom{k}{j}
            \Biggr]  
      \,\geq\, 2^{-k'} \,.    
  \end{equation*}
\end{lem}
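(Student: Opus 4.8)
The plan is to reinterpret both binomial sums probabilistically. Let $X_1,\dots,X_k$ be i.i.d.\ Rademacher variables and put $D := \sum_{i=1}^k X_i$, so that $(D+k)/2 \sim \operatorname{Bin}(k,1/2)$ and $2^{-k}\binom{k}{j} = \P\{(D+k)/2 = j\}$. The lower summation range $j \le \lfloor (k-k')/2\rfloor$ translates to $D \le 2\lfloor (k-k')/2\rfloor - k$, and a short parity check (using $D \equiv k \pmod 2$) shows this is the same event as $\{D \le -k'\}$, whatever the parity of $k-k'$; the upper range is handled symmetrically and corresponds to $\{D \ge k'+1\}$. Consequently
\[
  2^{-k}\Biggl[\sum_{j=0}^{\lfloor (k-k')/2\rfloor}\binom{k}{j} + \sum_{j=\lceil (k+k'+1)/2\rceil}^{k}\binom{k}{j}\Biggr]
    \,=\, \P\{D \le -k'\} + \P\{D \ge k'+1\}\,,
\]
so it remains to prove that the right-hand side is at least $2^{-k'}$.

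For this I would use a one-line conditioning argument on the first $k'$ coordinates. Set $R := X_{k'+1}+\dots+X_k$ (an empty sum, $R=0$, when $k=k'$), so that $D = (X_1+\dots+X_{k'}) + R$ with the two blocks independent, and condition on $R$. If $R \ge 1$, then on the event $\{X_1 = \dots = X_{k'} = +1\}$, of conditional probability $2^{-k'}$, one has $D = R + k' \ge k'+1$. If instead $R \le 0$, then on $\{X_1 = \dots = X_{k'} = -1\}$, again of conditional probability $2^{-k'}$, one has $D = R - k' \le -k'$. In either case
\[
  \P\bigl(\{D \le -k'\} \cup \{D \ge k'+1\} \,\big|\, R\bigr) \ge 2^{-k'}
\]
almost surely, and taking expectation over $R$ completes the proof.

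The only genuinely delicate point is the parity bookkeeping in the first step: one must check, across all four parity combinations of $k$ and $k'$, that the floor/ceiling cut-offs in the statement collapse to the clean thresholds $D \le -k'$ and $D \ge k'+1$. The coupling itself is entirely elementary and needs no case analysis beyond ``$R \ge 1$ versus $R \le 0$'', which together exhaust all possible values of $R$. If one prefers to avoid probabilistic language, the same argument can be phrased as an explicit injection of the $2^{k-k'}$ sign patterns on $\{1,\dots,k-k'\}$ into the set of sequences in $\{\pm1\}^k$ counted by the bracketed sum, but the conditioning formulation is shorter.
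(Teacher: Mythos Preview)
Your argument is correct. The parity bookkeeping works exactly as you indicate: writing $D=2j-k$, the lower cut-off becomes $D\le 2\lfloor(k-k')/2\rfloor-k$, which is $-k'$ when $k-k'$ is even and $-k'-1$ when $k-k'$ is odd; but in the latter case $-k'\not\equiv k\pmod 2$, so $D=-k'$ is impossible and both thresholds describe the same event $\{D\le -k'\}$. The upper cut-off is handled symmetrically. The conditioning step is clean; the two events $\{D\le -k'\}$ and $\{D\ge k'+1\}$ are disjoint, so the sum of their probabilities is the probability of the union, and your case split on the sign of $R$ indeed exhibits, for each value of $R$, a sub-event of conditional probability exactly $2^{-k'}$.

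This is genuinely different from the paper's proof, which runs an induction on $k$ with step size two: base cases $k=k'$ and $k=k'+1$, and in the step $k\to k+2$ the two partial sums are expanded via Pascal's rule and a monotonicity estimate $\binom{k}{\lfloor(k+2-k')/2\rfloor}\ge\binom{k}{\lfloor(k-k')/2\rfloor}$ (and its mirror) to recover four times the previous bracket. Your approach is more conceptual and avoids the algebraic bookkeeping of the induction; it also explains \emph{why} the constant is $2^{-k'}$, namely because forcing the first $k'$ signs to agree suffices to push $D$ into the tail regardless of the remaining block. The paper's route, on the other hand, is entirely self-contained at the level of binomial identities and does not require the reader to translate back and forth between sums and probabilities. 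Both proofs are short; yours is arguably more illuminating and extends more readily (e.g.\ to biased coins), while the paper's is the path of least resistance if one is already manipulating binomial tails.
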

\begin{proof}
  The proof follows by induction over $k\geq k'$. A speciality here is that
  in the induction step we assume the statement for $k$ and prove it for $k+2$, 
  which is sufficient when the base case is verified for $k=k'$ 
  and $k=k'+1$. 

  For $k=k'$ and $k=k'+1$ we have $2^{-k'} \binom{k}{0}$
  and $2^{-(k'+1)}[\binom{k'}{0}+\binom{k'+1}{k'+1}]$
  which proves the inequality. (We even have equality.)
  
  For the induction step from $k$ to $k+2$ where $k\geq k'$,
  via Pascal's rule, as well as using 
  $\binom{k}{\left \lfloor \frac{k+2-k'}{2} \right \rfloor}
    \geq \binom{k}{\left \lfloor \frac{k-k'}{2} \right \rfloor}$,
  we obtain
  \begin{equation*}
    \sum_{j=0}^{\left\lfloor \frac{k+2-k'}{2}\right\rfloor}\binom{k+2}{j}
      \,=\, 4 \sum_{j=0}^{\left\lfloor \frac{k-k'}{2}\right\rfloor-1}\binom{k}{j}
            + 3 \, \binom{k}{\left\lfloor \frac{k-k'}{2}\right\rfloor}
            + \binom{k}{\left \lfloor \frac{k+2-k'}{2} \right \rfloor}
    \,\geq\, 4 \sum_{j=0}^{\left\lfloor \frac{k-k'}{2}\right\rfloor}\binom{k}{j}.
  \end{equation*}
  Similarly, with  
  $\binom{k}{\left \lfloor \frac{k+k'+1}{2} \right \rfloor}
    \geq \binom{k}{\left \lfloor \frac{k+k'+3}{2} \right \rfloor}$,
  one can show
  \begin{equation*}
    \sum_{j=\left\lceil \frac{k+k'+3}{2} \right\rceil}^{k+2}
        \binom{k+2}{j}
      \,\geq\, 4 \sum_{j=\left\lceil \frac{k+k'+1}{2} \right\rceil}^k
                \binom{k}{j}.
  \end{equation*}
Now, by the induction hypothesis the assertion is proven. 
\end{proof}

\section*{Acknowledgements}

The authors wish to express their gratitude to Erich Novak
for many detailed hints and discussions during the work on this paper.
We also wish to thank Glenn Byrenheidt, Stefan Heinrich, Lutz K\"ammerer,
David Krieg, and Mario Ullrich for their advice.
Daniel Rudolf gratefully acknowledges support of
the Felix-Bernstein-Institute for Mathematical Statistics in the Biosciences (Volkswagen Foundation) and the Campus laboratory AIMS.

\end{document}